\numberwithin{equation}{section}
\newtheorem{theorem}{Theorem}[section]
\newtheorem{lemma}[theorem]{Lemma}
\newtheorem{remark}[theorem]{Remark}
\newenvironment{Proof}{\removelastskip\par\medskip
\noindent{\em Proof.} \rm}{\penalty-20\null\hfill$\square$\par\medbreak}
\numberwithin{equation}{section}
\newcommand{\Ai}{\text{Ai\,}}
\newcommand{\Tr}{\text{\rm{tr}\,}}
\newcommand{\bK}{\mathbf{K}}
\newcommand{\bA}{\mathbf{A}}
\newcommand{\bI}{\mathbf{I}}
\newcommand{\bQ}{\mathbf{Q}}
\newcommand{\bP}{\mathbf{P}}
\newcommand{\bM}{\mathbf{M}}
\newcommand{\bk}{\mathbf{k}}
\newcommand{\bR}{\mathbf{R}}
\newcommand{\bL}{\mathbf{L}}
\newcommand{\bJ}{\mathbf{J}}
\newcommand{\bbR}{\mathbb{R}}
\newcommand{\pii}{2\pi\mathrm{i}}
\begin{document}
\setcounter{page}{1}


\title[Long and short time asymptotics]
{Long and short time asymptotics of the two-time distribution in local random growth}
\author[K.~Johansson]{Kurt Johansson}

\thanks{Supported by the grant KAW 2015.0270 from the Knut and Alice Wallenberg Foundation and grant 2015-04872 from the Swedish Science Research Council (VR)}

\address{
Department of Mathematics,
KTH Royal Institute of Technology,
SE-100 44 Stockholm, Sweden}

\email{kurtj@kth.se}

\begin{abstract}
The two-time distribution gives the limiting joint distribution of the heights at two different times of a local 1D random growth model in the curved geometry.
This distribution has been computed in a specific model but is expected to be universal in the KPZ universality class. Its marginals are the GUE Tracy-Widom distribution.
In this paper we study two limits of the two-time distribution. The first, is the limit of long time separation when the quotient of the two times goes to infinity, and the second, is the 
short time limit when the quotient goes to zero.

\end{abstract}

\maketitle

\section{Introduction and results}\label{sect1}

\subsection{Introduction}\label{subsectIntro}
In this paper we will consider the short and long time separation limits of the asymptotic two-time distribution in a polynuclear growth model or, equivalently in a directed last-passage percolation model. For background on these models which belong to the so called KPZ universality class, we refer to \cite{Corw} and \cite{Quas}.
Let us recall the result on the two-time distribution from \cite{Jott}.
Let $\left(w(i,j)\right)_{i,j\ge 1}$ be independent geometric random variables with parameter $q$,
$$
\mathbb{P}[w(i,j)=k]=(1-q)q^k,\quad k\ge 0.
$$
Consider the last-passage times
\begin{equation}\label{gmn}
G(m,n)=\max_{\pi:(1,1)\nearrow (m,n)} \sum_{(i,j)\in\pi} w(i,j),
\end{equation}
where the maximum is over all up/right paths from $(1,1)$ to $(m,n)$, see \cite{JoSh}. It follows from (\ref{gmn}) that we have stochastic recursion formula
\begin{equation}\label{growth}
G(m,n)=\max(G(m-1,n), G(m,n-1))+w(m,n).
\end{equation}
We can relate this to a random growth model with an evolving height function as follows. Let $G(m,n)=0$ if $(m,n)\notin\mathbb{Z}_+^2$, and define the height function $h(x,t)$ by
\begin{equation}\label{hxt}
h(x,t)=G\left(\frac{t+x+1}2,\frac{t-x+1}2\right)
\end{equation}
for $x+t$ odd, and extend it to all $x\in\mathbb{R}$ by linear interpolation. Then (\ref{growth}) leads to a growth rule for $h(x,t)$ and this is the discrete time and space polynuclear growth model. We think of $x\mapsto h(x,t)$ as the height above $x$ at time $t$, and we get a random one-dimensional interface. Let the constants $c_i$ be given by 
\begin{equation}\label{scalingconstants}
c_1=q^{-1/6}(1+\sqrt{q})^{2/3},\quad
c_2=\frac{2\sqrt{q}}{1-\sqrt{q}},\quad c_3=\frac{q^{1/6}(1+\sqrt{q})^{1/3}}{1-\sqrt{q}}.
\end{equation}
Consider the rescaled height function
\begin{equation}\label{hrescaled}
\mathcal{H}_T(\eta,t)=\frac{h(2c_1\eta(tT)^{2/3},2tT)-c_2tT}{c_3(tT)^{1/3}},
\end{equation}
as a process in $\eta\in\mathbb{R}$ and $t>0$. It follows from \cite{JoDPG} that for a fixed $t>0$, the process converges, as $T\to\infty$, to $\mathcal{A}_2(\eta)-\eta^2$, where $\mathcal{A}_2(\eta)$ is the Airy-2-process. In particular,
for any fixed $\eta,t$, 
\begin{equation*}
\lim_{T\to\infty}\mathbb{P}[\mathcal{H}_T(\eta,t)\le\xi-\eta^2]=F_2(\xi)=\det(I-K_{\Ai})_{L^2(\xi,\infty)},
\end {equation*}
where $F_2$ is the Tracy-Widom distribution, and
\begin{equation}\label{Airykernel}
K_{\text{Ai}}(x,y)=\int_0^\infty \Ai(x+s)\Ai(y+s)\,ds,
\end{equation}
is the Airy kernel. 

It is proved in \cite{Jott} that we have the following limit theorem for the joint distribution of two height functions at different times. For $0<t_1<t_2$, and any fixed real numbers
$\eta_1,\eta_2,\xi_1$ and $\xi_2$,
\begin{equation}\label{ttlimit}
\lim_{T\to\infty}\mathbb{P}[\mathcal{H}_T(\eta_1,t_1)\le\xi_1,\mathcal{H}_T(\eta_2,t_2)\le\xi_2]=F_{tt}(\xi_1,\eta_1;\xi_2,\eta_2;\alpha),
\end{equation}
where
\begin{equation}\label{alpha}
\alpha=\frac{t_1}{t_2-t_1},
\end{equation}
and $F_{tt}$ is the two-time distribution function. This two-time distribution is expected to be universal in the KPZ universality class and should be the two-time limit
in many models. For example it is also the limit in Brownian directed last-passage percolation, see \cite{JoTt}.
In \cite{Jott}, different formulas for $F_{tt}$ are given. We will use the one given in (\ref{Fttformula1}) below. If we let
$h(t,\eta)$ be the limiting height function at rescaled time $t$ and rescaled position $\eta$, i.e. the limit of the random variable $\mathcal{H}_T(\eta,t)$ as $T\to\infty$, then
\begin{equation}\label{ttheight}
F_{tt}(\xi_1,\eta_1;\xi_2,\eta_2;\alpha)=\mathbb{P}[h(t_1,\eta_1)\le\xi_1,h(t_2,\eta_2)\le\xi_2].
\end{equation}
We are interested in the limits $\alpha\to 0$ (\it long time limit\rm) and $\alpha\to\infty$ (\it short time limit\rm) of the two-time distribution. The first limit corresponds to the limit of large time
separation, and the second limit to the case of short time separation.

We will show that, as $\alpha\to 0$, we have the asymptotic formula,
\begin{equation}\label{FttIntro}
F_{tt}(\xi_1,\eta_1;\xi_2,\eta_2;\alpha)=F_2(\xi_1+\eta_1^2)F_2(\xi_2+\eta_2^2)(1+e_1\alpha+e_2\alpha^2)+O(\alpha^3),
\end{equation}
where $e_1$ and $e_2$ are explicit but complicated functions of $\xi_1,\eta_1,\xi_2,\eta_2$, see Theorem \ref{Thmalphazero} below. Write $h_1=h(t_1,\xi_1)$ and $h_2=h(t_2,\xi_2)$.
For the limit $\alpha\to\infty$, we will show that, if we define
the rescaled height increment by
\begin{equation*}
h=(1+\alpha^3)^{1/3}h_2-\alpha h_1,
\end{equation*}
then
\begin{equation}\label{FttIntro2}
\lim_{\alpha\to\infty}\frac{\partial}{\partial\xi_1}\mathbb{P}[h_1\le\xi_1,h\le\xi]
=F_2'(\xi_1+\eta_1^2)\frac{\partial}{\partial\xi}\big(F_2(\xi+\eta^2)\psi(\xi,\eta)\big),
\end{equation}
with an explicit function $\psi(\xi,\eta)$, see Theorem \ref{Thmalphainfty}. Here $\eta_1$, $\xi_1$, $\eta$ and $\xi$ are fixed and related to $\eta_2$, $\xi_2$ by (\ref{xi2eta2}) below.
If
$\eta=0$, then $\frac{\partial}{\partial\xi}(F_2(\xi)\psi(\xi,0))$ in the right side of (\ref{FttIntro2}) is the Baik-Rains distribution $F_0(\xi)$, see e.g. \cite{FeSp}. Intuitively, we can understand this from the fact that we can think of $h$ as the evolved height starting from a stationary initial condition. Recall that the Airy process at time $t_1$ is locally Brownian,
\cite{Ha}, \cite{CoHa}. For a more precise version of this heuristic argument, the ergodicity of the KPZ fixed point, see \cite{Pim}.
The limit of $F_{tt}(\xi_1,\eta_1;\xi_2,\eta_2;\alpha)$ with fixed $\eta_1,\eta_2,\xi_1,\xi_2$ as $\alpha\to\infty$ can also be investigated and leads to the two-point
distribution in the Pr\"ahofer-Spohn form. This result is not immediate and we will not discuss it here.

These kinds of results have also been derived non-rigorously using the replica method 
by de Nardis and Le Doussal, \cite{NarDou}, in the limit $t_1/t_2\to 1$, i.e. $\alpha\to\infty$, and by
Le Doussal in \cite{Dous} for the limit $t_1/t_2\to 0$, which means $\alpha\to 0$, to give conjecturally exact formulas. We have checked that the
$\alpha$-coefficient $e_1$ in (\ref{FttIntro}) for $\eta_1=\eta_2=0$, see (\ref{e1spec}), agrees with the result in \cite{Dous}. 
It remains to check that $e_2$ also agrees, and that the formulas agree when $\eta_1,\eta_2\neq 0$. Another comparison between results obtained using the replica method and the exact formula, in the limit when $h(t_1,\eta_1)$ is large,
is given in \cite{dNLD}.
The result in \cite{NarDou} agrees with (\ref{FttIntro2}). 
Continuing earlier work in \cite{FerSpo}, the long and short time limits are also analyzed rigorously in \cite{FerOcc} using a variational approach. The object investigated in these
papers is not the two-time distribution but rather the two-time correlation function of the the heights. See also \cite{BG} for another paper on two-time correlations.
Furthermore, there are very interesting experimental and numerical results on the two-time problem by K. A. Takeuchi and collaborators, see \cite{Take}, \cite{TaSa} and \cite{NaDoTa}.

The two-time problem, and more generally the multi-time problem, have also been studied recently in another, related model by J. Baik and Z. Liu, \cite{BaiLiu}. Very recently, the multi-time, and a more general problem where the starting points in the last-passage percolation problem can also be varied, called the directed landscape, has been investigated in the paper \cite{DOV}. This is closely related to the recent work on the KPZ fixed point, see \cite{MaQuRe}. The multi-time problem in the present setting is studied in \cite{JR}, \cite{Liu}.

\bigskip
\noindent
{\bf Notation}. Throughout the paper $1(\cdot)$ denotes an indicator function, $\gamma_r(a)$  is a positively oriented circle of radius $r$ around the point $a$, and $\gamma_r=\gamma_r(0)$. Also, $\Gamma_c$ is the upward oriented straight line through the point $c$, $t\mapsto c+it$, $t\in\mathbb{R}$. 

\bigskip
\noindent
{\bf Acknowledgement}. I thank Pierre Le Doussal and Jacopo de Nardis for helpful discussions and correspondence.

\subsection{Results for the long time limit}\label{subsectLong}

Before we can state our results we introduce some notation. Given a sufficiently regular function $A(v)$ on $\mathbb{R}_+$, we write
\begin{equation}\label{Ak}
A^{(k)}(v)=(-1)^{k}\frac{d^kA}{dv^k},
\end{equation}
\begin{equation}\label{Akminus}
A^{(-k)}(v)=\int_0^{\infty}\frac{\lambda^{k-1}}{(k-1)!}A(v+\lambda)\,d\lambda,
\end{equation}
for $k\ge 1$, and $A^{(0)}(v)=A(v)$. To the function $A$ we associate the operator $\mathbf{A}$ on $L^2(\mathbb{R}_+)$ with kernel
\begin{equation}\label{Akernel}
\bA(v_1,v_2)=A(v_1+v_2).
\end{equation}
For two functions $A,B$ on $\mathbb{R}_+$, $A\otimes B$ is the rank one operator with kernel $A(v_1)B(v_2)$. Given an operator $\mathbf{K}$ with kernel $\bK(v_1,v_2)$,
we write
\begin{equation}\label{Kkminus}
K^{(-k)}(v)=\int_0^{\infty}\frac{\lambda^{k-1}}{(k-1)!}\bK(v,\lambda)\,d\lambda.
\end{equation}
Note that, if $\bK=\bA$ is given by (\ref{Akernel}) this is consistent with (\ref{Akminus}).

The basic functions that we will use are
\begin{equation}\label{Ai}
A_{i,\pm}(v)=\Ai(\xi_i+\eta_i^2+v)e^{\pm(\xi_i+v)\eta_i\pm2\eta_i^3/3},
\end{equation}
$i=1,2$. Define the operators $\bK_i$, $i=1,2$, on $L^2(\mathbb{R}_+)$ by
\begin{equation}\label{Ki}
\bK_i=\bA_{i,+}\bA_{i,-}.
\end{equation}
Note that, unless $\eta_i=0$, the operator $\bK_i$ is not symmetric, in fact
\begin{equation}\label{Kistar}
\bK_i^*=\bA_{i,-}\bA_{i,+},
\end{equation}
with kernel $\bK_i^*(v_1,v_2)=\bK_i(v_2,v_1)$. Recall the definition of the Airy kernel (\ref{Airykernel}). When $\eta_i=0$, then $\bK_i$ is the operator with kernel
\begin{equation}\label{KAiry}
\bK_{\Ai,\xi_i}(v_1,v_2)=\bK_{\Ai}(\xi_i+v_1,\xi_i+v_2),
\end{equation}
i.e. we have the Airy kernel shifted by $\xi_i$. If $\eta_i\neq 0$, then $\bK_i$ is a conjugation of the Airy kernel shifted by $\xi_i+\eta_i^2$,
\begin{equation}\label{KAiry2}
\bK_i(v_1,v_2)=e^{\eta_i(v_1-v_2)}\bK_{\Ai,\xi_i+\eta_i^2}(v_1,v_2).
\end{equation}

We will now define the quantities that will appear in our theorem. Let
\begin{equation}\label{r1}
r_1=r_1(\xi_1,\eta_1)=\Tr (\bI -\bK_1)^{-1}\bK_1,
\end{equation}
\begin{equation}\label{a0}
a_0=a_0(\xi_2,\eta_2)=\Tr (\bI -\bK_2^*)^{-1}\bA_{2,-}\otimes \bA_{2,+},
\end{equation}
\begin{equation}\label{a1}
a_1=a_1(\xi_2,\eta_2)=\Tr (\bI -\bK_2^*)^{-1}\bA_{2,-}^{(1)}\otimes \bA_{2,+}.
\end{equation}
Let $F_2$ denote the GUE Tracy-Widom distribution,
\begin{equation}\label{TW}
F_2(\xi)=\det(\bI-\bK_{\Ai,\xi})_{L^2(\mathbb{R}_+)}.
\end{equation}
From (\ref{KAiry2}), we see that
\begin{equation}\label{TW2}
\det(\bI-\bK_i)_{L^2(\mathbb{R}_+)}=F_2(\xi_i+\eta_i^2).
\end{equation}
Furthermore, for $\epsilon_1,\epsilon_2\in\{0,1\}$, $r,s\ge 0$, and $(\epsilon_1,\epsilon_2)\neq(0,0)$, we define
\begin{equation}\label{brs}
b_{r,s}(\epsilon_1,\epsilon_2)=\int_{\bbR_+^2}(\bA_{1,-}^{\epsilon_1}(\bI-\bK_1)^{-r}\bA_{1,+}^{\epsilon_2})(\lambda_1,\lambda_2)\frac{\lambda_2^s}{s!}\,d^2\lambda.
\end{equation}
and if $(\epsilon_1,\epsilon_2)=(0,0)$, we set
\begin{equation}\label{brs0}
b_{r,s}(0,0)=\int_{\bbR_+^2}(\bI-\bK_1)^{-r}\bK_1(\lambda_1,\lambda_2)\frac{\lambda_2^s}{s!}\,d^2\lambda.
\end{equation}
We will write
\begin{equation}\label{br}
b_{r}(\epsilon_1,\epsilon_2)=b_{r,0}(\epsilon_1,\epsilon_2),
\end{equation}
for $r\ge 0$, $\epsilon_1,\epsilon_2\in\{0,1\}$
Note that $b_{r,s}$ is a function of $\xi_1,\eta_1$. If we want to indicate this we write $b_{r,s}(\epsilon_1,\epsilon_2)(\xi_1,\eta_1)$.
Define,
\begin{align}\label{g1}
\psi_1(\xi_1,\eta_1)&=\xi_1-b_1(1,1)-b_1(0,0)+b_1(1,0)+b_1(0,1),\\
\psi_2(\xi_1,\eta_1)&=b_1(1,0)+b_1(0,1)-b_1(0,0)+b_2(1,1)+b_2(0,0)-b_2(1,0)-b_2(0,1),\notag
\end{align}
\begin{equation}\label{phi1}
\phi_1(\xi_1,\eta_1)=-b_{1,1}(1,1)-b_{1,1}(1,0)+b_{1,1}(0,0)-b_{0,1}(1,0)+b_{0,1}(0,1)+\xi_1[b_1(0,1)-b_1(1,1)]+\frac{\xi_1^2}{2}-\eta_1,
\end{equation}
and
\begin{align}\label{phi2}
\phi_2(\xi_1,\eta_1)&=b_{2,1}(1,1)+b_{2,1}(1,0)-b_{2,1}(0,1)-b_{2,1}(0,0)-b_{1,1}(1,0)+b_{1,1}(0,1)+b_{1,1}(0,0)\\&-b_{0,1}(1,0)-b_{0,1}(0,1)
+\xi_1[b_2(1,1)-b_2(0,1)+b_1(0,1)].\notag
\end{align}

We can now state our result for the long time limit.
\begin{theorem}\label{Thmalphazero}
We have the asymptotic formula,
\begin{equation}\label{Fttzero}
F_{tt}(\xi_1,\eta_1;\xi_2,\eta_2;\alpha)=F_2(\xi_1+\eta_1^2)F_2(\xi_2+\eta_2^2)(1+e_1\alpha+e_2\alpha^2)+O(\alpha^3)
\end{equation}
as $\alpha\to 0$. Here,
\begin{equation}\label{e1}
e_1=e_1(\xi_1,\eta_1,\xi_2,\eta_2)=a_0(\xi_2,\eta_2)ß[r_1(\xi_1,\eta_1)\psi_1(\xi_1,\eta_1)+\psi_2(\xi_1,\eta_1)],
\end{equation}
\begin{align}\label{e2}
e_2=e_2(\xi_1,\eta_1,\xi_2,\eta_2)&=r_1(\xi_1,\eta_1)\big[a_1(\xi_2,\eta_2)\phi_1(\xi_1,-\eta_1)+a_1(\xi_2,-\eta_2)\phi_1(\xi_1,\eta_1)\big]\\
&+a_1(\xi_2,\eta_2)\phi_2(\xi_1,-\eta_1)+a_1(\xi_2,-\eta_2)\phi_2(\xi_1,\eta_1).\notag
\end{align}
\end{theorem}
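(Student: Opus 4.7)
The plan is to start from the Fredholm determinant formula (\ref{Fttformula1}) for $F_{tt}$, in which $\alpha$ enters only through a coupling kernel between two copies of $L^2(\mathbb{R}_+)$. Schematically, one has an expression of the form $\det(\bI - \bK_1 \oplus \bK_2^* - \alpha\,\bM(\alpha))$ that factorises at $\alpha = 0$ into $\det(\bI-\bK_1)\det(\bI-\bK_2^*)$, which equals $F_2(\xi_1+\eta_1^2)F_2(\xi_2+\eta_2^2)$ by (\ref{TW2}). This produces the stated leading term, and the whole point of the proof becomes the Taylor expansion of the coupling.

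After pulling out this leading factor, the remainder can be written as $\det(\bI - \bR(\alpha))$, where $\bR(\alpha)$ couples the two resolvents $(\bI-\bK_1)^{-1}$ and $(\bI-\bK_2^*)^{-1}$ through the coupling kernel. The strategy is then to apply
\[
\log\det(\bI-\bR(\alpha)) = -\sum_{k\ge 1}\tfrac{1}{k}\Tr\bR(\alpha)^k,
\]
and differentiate twice at $\alpha=0$, reading off $e_1$ and $e_2$ as traces of products of resolvents, the operators $\bA_{i,\pm}$, and their derivatives. At linear order, the coupling contributes a rank-one piece in the second coordinate (producing $a_0(\xi_2,\eta_2)$ as in (\ref{a0})) tensored against an operator on the first coordinate whose trace, after integration by parts, assembles into $r_1\psi_1 + \psi_2$. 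At quadratic order, an additional power of $\alpha$ in the coupling — which typically appears through an exponential factor in the contour integrals or through a residue expansion — generates both $a_1(\xi_2,\eta_2)$ and its partner $a_1(\xi_2,-\eta_2)$; the latter arises because $\bA_{2,+}$ and $\bA_{2,-}$ enter the coupling symmetrically, giving two separate residue contributions. Their coefficients $\phi_1, \phi_2$ are obtained by a parallel but more intricate calculation on the first coordinate.

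The main obstacle is algebraic bookkeeping: the two derivatives generate many terms, each a trace involving combinations of $\bA_{1,\pm}$, $\bA_{1,\pm}^{(\pm 1)}$, and powers of $(\bI-\bK_1)^{-1}$. Matching the specific combinations $\psi_1, \psi_2, \phi_1, \phi_2$ requires systematic integration by parts (to turn factors of $\bA_{1,\pm}^{(1)}$ into shifts of the index $s$ in $b_{r,s}$) together with the resolvent identity $(\bI-\bK_1)^{-1}-\bI=(\bI-\bK_1)^{-1}\bK_1$ to reconcile different powers of the resolvent. The explicit polynomial factors $\xi_1, \xi_1^2, \eta_1$ in $\phi_1, \phi_2$ should come out of expanding the exponentials $e^{\pm(\xi_i+v)\eta_i\pm 2\eta_i^3/3}$ from (\ref{Ai}) that sit inside the coupling, combined with residue evaluations in the contour-integral representation of $F_{tt}$. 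Finally, the $O(\alpha^3)$ remainder is controlled by a standard trace-norm bound on the third derivative of $\det(\bI-\bR(\alpha))$, using the Airy decay to ensure that $\bK_i$, $\bA_{i,\pm}$ and the coupling are uniformly trace class on a neighbourhood of $\alpha = 0$.
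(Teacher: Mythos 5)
Your proposal has a genuine structural gap: it ignores the $u$-contour integral in the formula for $F_{tt}$. The actual starting point (\ref{Fttformula1}) is
\[
F_{tt}=\frac{1}{2\pi\mathrm{i}}\int_{\gamma_r}\det(\bI+\bQ(u))_Y\,\frac{du}{u-1},
\]
where $\bQ(u)$ depends on $u$ through the Laurent-polynomial weights $(2-u-u^{-1})$, $(u-1)$, $(1-u^{-1})$, etc., multiplying the operators $\bk_i$, $\bM_i$. You propose to pull out the leading product $F_2(\xi_1+\eta_1^2)F_2(\xi_2+\eta_2^2)$ and expand a remaining $\det(\bI-\bR(\alpha))$ directly, but this is not what happens: at $\alpha=0$ the matrix $\bI+\bQ_0$ is block lower-triangular with diagonal $(\bI-\bK_2^*,\ \bI+(u^{-1}-1)\bM_1)$, so one instead obtains (Lemma \ref{LemQ0exp})
\[
\det(\bI+\bQ_0)=F_2(\xi_1+\eta_1^2)F_2(\xi_2+\eta_2^2)\det(\bI+u^{-1}\bR),
\]
which still depends on $u$ through $\bR=(\bI-\bM_1)^{-1}\bM_1$. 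The coefficients $r_1,r_2$ in (\ref{e1}), (\ref{e2}) come precisely from the $u^{-k}$-expansion of this extra determinant factor, and they survive only after the contour integral picks out appropriate Laurent coefficients via $\frac{1}{2\pi\mathrm{i}}\int_{\gamma_r}\frac{du}{u^k(u-1)}$. Your plan has no mechanism to generate $r_1$ (let alone $r_2$ in (\ref{e2formula})), and would at best produce a formula missing the $r_1\psi_1$ and $r_1[\cdots]$ terms.

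A secondary issue: you describe $\bQ$ as having the form $\bK_1\oplus\bK_2^*+\alpha\,\bM(\alpha)$, i.e.\ a coupling that vanishes at $\alpha=0$. This is not accurate. The coupling entries $\bk_{6,0},\bk_{7,0}$ are nonzero at $\alpha=0$ (Lemma \ref{LemkiMiformulas}); they do not contribute to $\det(\bI+\bQ_0)$ only because the off-diagonal block above them happens to vanish. These same $\bk_{6,0},\bk_{7,0}$ do, however, enter the resolvent $(\bI+\bQ_0)^{-1}$ (formulas (\ref{Q00})--(\ref{Q0k})) and therefore contribute to the traces $\Tr\bP_r(k)$ that build $e_1$ and $e_2$. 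Treating them as $O(\alpha)$ would lose terms. The correct procedure is the one in the paper: expand $\det(\bI+\bQ)$ and $(\bI+\bQ_0)^{-1}$ in both $\alpha$ and $u$, organize the Laurent coefficients $\bP_r(k)$, contract against $\det(\bI+u^{-1}\bR)=1+\sum_k r_k u^{-k}$, and only then perform the $u$-integral (Lemma \ref{Leme1e2formula}). Your bookkeeping comments on integration by parts, the resolvent identity, and the trace-norm control of the remainder are on the right track, but they cannot rescue the missing $u$-integral analysis.
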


The theorem will be proved in section \ref{secThmalphazero}. It is possible to compute higher order terms using the same approach but the computations become
rather cumbersome so we stopped at the second order.

\begin{remark}
In the case $\eta_1=\eta_2=0$, the formulas for $e_1$ and $e_2$ can be simplified. When $\eta_1=\eta_2=0$, we write
\begin{equation*}
b_{r,s}(0)=b_{r,s}(0,0)(\xi_1,0)=b_{r,s}(1,1)(\xi_1,0),\quad b_{r,s}(1)=b_{r,s}(1,0)(\xi_1,0)=b_{r,s}(0,1)(\xi_1,0),
\end{equation*}
and $r_1(\xi_1)=r_1(\xi_1,0)$.
Then,
\begin{align}\label{g1spec}
\psi_1(\xi_1)&=\psi_1(\xi_1,0)=\xi_1-2b_1(0)+2b_1(1),
\\
\psi_2(\xi_1)&=\psi_2(\xi_1,0)=2b_1(1)-b_1(0)+2b_2(0)-2b_2(1),\notag
\end{align}
and
\begin{equation}\label{e1spec}
e_1(\xi_1,\xi_2)=\frac{F_2'(\xi_2)}{F_2(\xi_2)}[r_1(\xi_1)\psi_1(\xi_1)+\psi_2(\xi_1)].
\end{equation}
Also, with
\begin{equation}\label{a1spec}
a_1(\xi_2)=a_1(\xi_2,0),
\end{equation}
and
\begin{align}\label{phi1spec}
\phi_1(\xi_1)&=\phi_1(\xi_1,0)=-b_{1,1}(1)+\xi_1[b_1(1)-b_1(0)]+\frac 12\xi_1^2,\\
\phi_2(\xi_1)&=\phi_2(\xi_1,0)=-b_{1,1}(0)+\xi_1[b_1(1)+b_2(0)-b_2(1)],\notag
\end{align}
we get
\begin{equation}\label{e2spec}
e_2(\xi_1,\xi_2)=2a_1(\xi_2)[r_1(\xi_1)\phi_1(\xi_1)+\phi_2(\xi_1)].
\end{equation}
\end{remark}

It is possible to give express $a_0(\xi_2,\eta_2)$ and $a_1(\xi_2,0)$ in terms of the Tracy-Widom distribution. The following Lemma will be proved in section \ref{secLemmas}.
\begin{lemma}\label{Lema0}
We have the formulas,
\begin{equation}\label{a0TW}
a_0(\xi_2,\eta_2)=\frac{F_2'(\xi_2+\eta_2^2)}{F_2(\xi_2+\eta_2^2)},
\end{equation}
and
\begin{equation}\label{a1TW}
a_1(\xi_2,0)=-\frac{F_2''(\xi_2)}{2F_2(\xi_2)}.
\end{equation}
\end{lemma}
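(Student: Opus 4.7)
The plan is to reduce both identities to the standard log-derivative formula
\begin{equation*}
F_2'(\xi)/F_2(\xi) = -\Tr((\bI-\bK_{\Ai,\xi})^{-1}\partial_\xi \bK_{\Ai,\xi}),
\end{equation*}
combined with the observation that, after writing $\bK_{\Ai,\xi}(v_1,v_2) = \int_\xi^\infty \Ai(v_1+s)\Ai(v_2+s)\,ds$, one has the rank-one derivative $\partial_\xi \bK_{\Ai,\xi}(v_1,v_2) = -\Ai(\xi+v_1)\Ai(\xi+v_2)$.

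For (\ref{a0TW}) my first step is to show that $a_0(\xi_2,\eta_2)$ depends on $(\xi_2,\eta_2)$ only through the combination $\zeta:=\xi_2+\eta_2^2$, reducing the identity to the case $\eta_2=0$. Iterating the conjugation identity (\ref{KAiry2}) together with the symmetry of the shifted Airy kernel gives $(\bK_2^*)^n(v_1,v_2) = e^{-\eta_2(v_1-v_2)}\bK_{\Ai,\zeta}^n(v_1,v_2)$. Expanding $(\bI-\bK_2^*)^{-1}$ as a Neumann series and pairing each factor $(\bK_2^*)^n$ between $A_{2,+}(v_1)$ and $A_{2,-}(v_2)$, the exponential $e^{\xi_2\eta_2+2\eta_2^3/3}$ extracted from $A_{2,+}(v_1)e^{-\eta_2 v_1}$ cancels exactly against $e^{-\xi_2\eta_2-2\eta_2^3/3}$ from $A_{2,-}(v_2)e^{\eta_2 v_2}$, leaving the $\eta_2$-independent integrand $\Ai(\zeta+v_1)\bK_{\Ai,\zeta}^n(v_1,v_2)\Ai(\zeta+v_2)$. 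Summing in $n$ yields $a_0(\xi_2,\eta_2) = a_0(\zeta,0)$. At $\eta_2=0$, where $A_{2,+}=A_{2,-}=\Ai(\xi_2+\cdot)=:u$ and $\bK_2^*=\bK_2$, applying the log-derivative formula with $\partial_{\xi_2}\bK_2=-u\otimes u$ gives $F_2'(\xi_2)/F_2(\xi_2) = \langle u, (\bI-\bK_2)^{-1}u\rangle = a_0(\xi_2,0)$, which finishes (\ref{a0TW}).

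For (\ref{a1TW}) I differentiate the identity $a_0(\xi_2,0) = F_2'(\xi_2)/F_2(\xi_2)$ in $\xi_2$. Writing $R:=(\bI-\bK_2)^{-1}$ and using $\partial_{\xi_2}u = -u^{(1)}$ (immediate from $A^{(1)} = -A'$) together with $\partial_{\xi_2}R = -R(u\otimes u)R$ at $\eta_2=0$, I expand
\begin{equation*}
\frac{da_0}{d\xi_2} = -\langle u^{(1)},Ru\rangle - \langle u,R(u\otimes u)Ru\rangle - \langle u,Ru^{(1)}\rangle = -\langle u^{(1)},Ru\rangle - a_0^2 - \langle u,Ru^{(1)}\rangle,
\end{equation*}
the middle equality using $(u\otimes u)Ru = \langle u,Ru\rangle u = a_0 u$. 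At $\eta_2=0$ the symmetry $R=R^*$ gives $\langle u^{(1)},Ru\rangle = \langle u,Ru^{(1)}\rangle = a_1(\xi_2,0)$. Since the right-hand side of the differentiated identity is $F_2''/F_2-(F_2'/F_2)^2 = F_2''/F_2-a_0^2$, the $a_0^2$ terms cancel and I obtain $a_1(\xi_2,0) = -F_2''(\xi_2)/(2F_2(\xi_2))$. The only subtlety worth flagging is that (\ref{a1TW}) is stated only at $\eta_2=0$, and the fusion $\langle u^{(1)},Ru\rangle = \langle u,Ru^{(1)}\rangle$ genuinely relies on $\bK_2 = \bK_2^*$, which fails for nonzero $\eta_2$; the $\eta_2$-independence argument used for $a_0$ cannot be adapted verbatim to $a_1$ since $\bA_{2,-}^{(1)}$ and $\bA_{2,+}$ scale differently under the exponential conjugation.
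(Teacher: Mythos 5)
Your proof is correct. Let me compare it with the paper's.

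For (\ref{a1TW}) your argument is essentially the paper's: differentiate the identity $a_0=F_2'/F_2$ in $\xi_2$, use the rank-one perturbation $\partial_{\xi_2}R=-R(u\otimes u)R$ to produce $-a_0^2$, and use $\bK_2=\bK_2^*$ at $\eta_2=0$ to fuse the two cross terms into $2a_1(\xi_2,0)$. The only cosmetic difference is that the paper computes $\partial a_0/\partial\xi_2=-a_1^*-a_0^2-a_1$ for general $\eta_2$ and then specializes using $a_1^*(\xi_2,0)=a_1(\xi_2,0)$, whereas you work at $\eta_2=0$ from the outset; these are the same computation.

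For (\ref{a0TW}) you take a genuinely different route. You first prove the invariance $a_0(\xi_2,\eta_2)=a_0(\xi_2+\eta_2^2,0)$ by observing that the exponential conjugation cancels term by term in the Neumann series for $(\bI-\bK_2^*)^{-1}$ when trapped between $A_{2,+}$ and $A_{2,-}$, and then apply the standard log-derivative identity for the Airy kernel at $\eta_2=0$. The paper instead handles all $\eta_2$ in one stroke: it writes $F_2(\xi_2+\eta_2^2)=\det(\bI-\bK_2^*)$, notes $\partial_{\xi_2}A_{2,\pm}=\partial_v A_{2,\pm}$, converts $\partial_{\xi_2}$ acting on the kernel $\bK_2^*(v_1,v_2)=\int_0^\infty A_{2,-}(v_1+v_3)A_{2,+}(v_3+v_2)\,dv_3$ into $\partial_{v_3}$ under the integral, and evaluates the boundary term to get $\partial_{\xi_2}\bK_2^*=-A_{2,-}\otimes A_{2,+}$. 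Then $\partial_{\xi_2}\log\det(\bI-\bK_2^*)=\Tr(\bI-\bK_2^*)^{-1}A_{2,-}\otimes A_{2,+}=a_0$ immediately. The paper's version is shorter and avoids the Neumann-series reduction (and its implicit spectral-radius hypothesis); your version makes the dependence on the single variable $\xi_2+\eta_2^2$ explicit, which is a nice remark but not strictly needed here. Your concluding caveat about why the $\eta_2$-invariance argument does not carry over to $a_1$ is accurate and consistent with (\ref{a1TW}) being stated only at $\eta_2=0$.
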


\subsection{Results for the short time limit}\label{subsectShort}

Next, we consider the short time limit $\alpha\to\infty$. In the limit $\alpha\to\infty$ we consider $h_1$ together with the random variable
\begin{equation}\label{hrv}
h=\alpha 'h_2-\alpha h_1,
\end{equation}
where, as above, $h_i=h_i(t_i,\xi_i)$, and
\begin{equation}\label{alphaprime}
\alpha'=(1+\alpha^3)^{1/3}=\left(\frac{t_2}{t_2-t_1}\right)^{1/3}.
\end{equation}
Note that $\alpha'=\alpha+\frac 1{3\alpha^2}+\dots$, so we could write $h=\alpha(h_2-h_1)$ in (\ref{hrv}). As $\alpha\to\infty$ the difference $h_2-h_1$ goes to zero
and multiplying by $\alpha$ turns out to be the right scaling to see a non-trivial limit. The fact that we use $\alpha'$ in (\ref{hrv}) is more technical. It is related to the
fact that $\alpha'$ appears naturally in the convolution equation
\begin{equation*}
\int_{-\infty}^\infty\Ai(a+\alpha x)\Ai(b-x)\,dx=\frac 1{\alpha'}\Ai\left(\frac{a+\alpha b}{\alpha'}\right).
\end{equation*}
We want to investigate the distribution function $\mathbb{P}[h_1\le\xi_1,h\le\xi]$ as $\alpha\to\infty$. From (\ref{ttheight}), we see that
\begin{align}
\mathbb{P}[h_1\le\xi_1,h\le\xi]&=\mathbb{P}[h_1\le\xi_1,\alpha'h_2-\alpha h_1\le\xi]\\
&=\int_{\{\xi_1'\le \xi_1,\xi_2'\le (\alpha\xi_1'+\xi)/\alpha'\}}\frac{\partial^2 F_{tt}}{\partial\xi_1\partial\xi_2}(\xi_1',\eta_1;\xi_2',\eta_2;\alpha)\,d\xi_1'd\xi_2'.
\notag
\end{align}
We can do the $\xi_2'$-integration and this gives the formula
\begin{equation}\label{honeh}
\mathbb{P}[h_1\le\xi_1,h\le\xi]=\int_{-\infty}^{\xi_1}\frac{\partial F_{tt}}{\partial\xi_1}(\xi_1',\eta_1;\frac{\xi+\alpha\xi_1'}{\alpha'},\eta_2;\alpha)\,d\xi_1'.
\end{equation}
To get a limit we also need to rescale $\eta_2$ as $\alpha\to\infty$ appropriately. Thus, we set 
\begin{equation}\label{xi2eta2}
\xi_2=\frac{\xi+\alpha\xi_1}{\alpha'},\quad\eta_2=\frac{\eta+\alpha^2\eta_1}{\alpha'^2},
\end{equation}
with $\xi_1$, $\xi$, $\eta_1$ and $\eta$ fixed as $\alpha\to\infty$. The first equation in (\ref{xi2eta2}) comes from (\ref{honeh}) and the second gives the rescaling we have to do
in $\eta_2$ to get a good limit.
To avoid analyzing the integration in (\ref{honeh}), we choose to study
\begin{equation}\label{dFxi1}
\frac{\partial}{\partial\xi_1}\mathbb{P}[h_1\le\xi_1,h\le\xi]=\frac{\partial F_{tt}}{\partial\xi_1}(\xi_1,\eta_1;\frac{\xi+\alpha\xi_1}{\alpha'},\frac{\eta+\alpha^2\eta_1}{\alpha'^2};\alpha).\end{equation}
We make some definitions in analogy with (\ref{Ai}) and (\ref{Ki}),
\begin{equation}\label{A0}
A_{0,\pm}(v)=\Ai(\xi+\eta^2+v)e^{\pm(\xi+v)\eta\pm\frac 23\eta^3},
\end{equation}
and
\begin{equation}\label{K0}
\bK_0=\bA_{0,+}\bA_{0,-}.
\end{equation}
Furthermore,
\begin{equation}\label{brtilde}
\tilde{b}_r(\epsilon_1,\epsilon_2)=\int_{\bbR_+^2}(\bA_{0,-}^{\epsilon_1}(\bI-\bK_0)^{-r}\bA_{0,+}^{\epsilon_2})(\lambda_1,\lambda_2)\,d^2\lambda
\end{equation}
for $(\epsilon_1,\epsilon_2)\neq(0,0)$, $r\ge 0$, and
\begin{equation}\label{brtilde0}
\tilde{b}_r(0,0)=\int_{\bbR_+^2}(\bI-\bK_0)^{-r}\bK_0(\lambda_1,\lambda_2)\,d^2\lambda,
\end{equation}
which are functions of $\xi,\eta$.
Let,
\begin{equation}\label{g2}
\psi(\xi,\eta)=\xi+\tilde{b}_1(0,1)+\tilde{b}_1(1,0)-\tilde{b}_1(0,0)-\tilde{b}_1(1,1).
\end{equation}
We can now formulate our theorem which will be proved in section \ref{secThmalphainfty}.

\begin{theorem}\label{Thmalphainfty}
We have the following limit,
\begin{equation}\label{Fttinfty}
\lim_{\alpha\to\infty}\frac{\partial F_{tt}}{\partial\xi_1}(\xi_1,\eta_1;\frac{\xi+\alpha\xi_1}{\alpha'},\frac{\eta+\alpha^2\eta_1}{\alpha'^2};\alpha)
=F_2'(\xi_1+\eta_1^2)\frac{\partial}{\partial\xi}\big(F_2(\xi+\eta^2)\psi(\xi,\eta)\big).
\end{equation}
\end{theorem}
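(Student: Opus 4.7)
The plan is to work from the explicit formula (\ref{Fttformula1}) for $F_{tt}$ and carry out the $\alpha \to \infty$ asymptotic analysis at the level of that formula. First I would compute $\partial F_{tt}/\partial\xi_1$ using the standard identity $\partial_{\xi_1}\det(\bI - M) = -\det(\bI - M)\,\Tr((\bI - M)^{-1}\partial_{\xi_1}M)$. This produces a product structure in which the principal $\xi_1$-dependence sits in the time-1 operators $\bK_1$ and $\bA_{1,\pm}$; once the time-2 factors are resolved in the limit, the time-1 piece reduces to $F_2'(\xi_1+\eta_1^2)$ via (\ref{TW2}) and the Tracy-Widom identity $\partial_{\xi_1}\det(\bI - \bK_1) = -F_2(\xi_1+\eta_1^2)\Tr((\bI-\bK_1)^{-1}\partial_{\xi_1}\bK_1)$.

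I would then substitute the scaling (\ref{xi2eta2}), namely $\xi_2 = (\xi+\alpha\xi_1)/\alpha'$ and $\eta_2 = (\eta+\alpha^2\eta_1)/\alpha'^2$, into every time-2 object and into the coupling kernel between the two times. Under this scaling the arguments of the Airy functions inside $A_{2,\pm}$ diverge with $\alpha$, so the asymptotics must be extracted from the pairing of $A_{2,\pm}$ with the coupling kernel rather than from $A_{2,\pm}$ in isolation. The key analytic tool is the convolution identity highlighted in the excerpt,
\begin{equation*}
\int_{-\infty}^\infty \Ai(a+\alpha x)\Ai(b-x)\,dx = \frac{1}{\alpha'}\Ai\!\left(\frac{a+\alpha b}{\alpha'}\right),
\end{equation*}
which, after a change of variables, converts the rescaled $A_{2,\pm}$ convolved against the coupling kernel into the short-time Airy data $A_{0,\pm}$ of (\ref{A0}) at the parameters $(\xi,\eta)$. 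Iterating this collapse through the resolvent expansion of $(\bI-\bK_2)^{-1}$ appearing in (\ref{Fttformula1}), I expect the operator $\bK_2$, pre- and post-composed with the coupling, to transmute into $\bK_0$, and the trace quantities defining $a_0(\xi_2,\eta_2)$ and its relatives to become the $\tilde{b}_r(\epsilon_1,\epsilon_2)$ of (\ref{brtilde})--(\ref{brtilde0}).

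The remaining step is essentially algebraic: using the trace identity $\partial_\xi F_2(\xi+\eta^2) = -F_2(\xi+\eta^2)\Tr((\bI - \bK_0)^{-1}\partial_\xi\bK_0)$ together with its variants for derivatives of the $\tilde{b}_r$, I would reorganise the surviving terms into $F_2'(\xi_1+\eta_1^2)\,\partial_\xi\bigl(F_2(\xi+\eta^2)\psi(\xi,\eta)\bigr)$ with $\psi$ as in (\ref{g2}). The main obstacle I anticipate is the uniform control of the asymptotics across the integration variables of the Fredholm resolvents: to interchange the $\alpha \to \infty$ limit with the resolvent series one must bound Airy factors whose arguments depend on $\alpha$ in a non-uniform way, and one must also check that the many cross-terms which do not survive the limit indeed decay. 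I would handle this by deforming the contours from (\ref{Fttformula1}) to steepest-descent paths adapted to the large-$\alpha$ oscillation, using the standard exponential Airy decay to suppress subdominant terms, and only then match the resulting algebraic expression term-by-term against the explicit form (\ref{g2}) of $\psi$.
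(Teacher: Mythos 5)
Your outline and the paper's proof are genuinely different routes, and the difference is not cosmetic. You propose to work directly from (\ref{Fttformula1}), substitute the scaling (\ref{xi2eta2}), and extract $\alpha\to\infty$ asymptotics by pairing the rescaled $A_{2,\pm}$ against the coupling kernels via the Airy convolution identity, with steepest descent to control errors. The paper instead exploits a \emph{symmetry} of the two-time formula: using (6.26) from \cite{Jott}, one can write $F_{tt}$ as the same $u$-contour integral but with $(\xi_1,\eta_1,\alpha,u)$ replaced by $(\Delta\xi,\Delta\eta,\beta,u^{-1})$ where $\beta=1/\alpha$. After a contour deformation picking up $F_2(\xi_2)$ and a change of variables $u\to 1/u$, the short-time limit $\alpha\to\infty$ becomes a \emph{Taylor expansion in $\beta\to 0$} of the Fredholm determinant — exactly the same perturbative machinery already built for the long-time case. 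The $1/\beta$ apparently appearing from the chain rule $\partial\Delta\xi/\partial\xi_1 = -1/\beta$ is rendered harmless by showing that the constant term $H_0$ of the $u$-integral vanishes. This is the crucial simplification: it removes any need for uniform-in-$\alpha$ Airy estimates or saddle-point deformation, and reduces the whole proof to reading off the coefficient $H_1$ of $\beta$.

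The gap in your approach, concretely, is that it does not engage with the $u$-contour integral in (\ref{Fttformula1}) at all. The formula is not a single determinant but $\frac{1}{\pii}\int_{\gamma_r}\det(\bI+\bQ(u))\,du/(u-1)$, with $\xi_2,\eta_2$ buried inside the matrix kernels $\bk_i$, $\bM_i$, which couple the two times through products like $(z-\alpha w)$, $(\alpha\omega-\zeta)$ in the contour integrands. There is no clean factorization into "a time-1 piece and a time-2 piece" to which your Tracy--Widom differentiation argument could be applied; and the Airy convolution identity, while indeed the reason $\alpha'$ appears, is not by itself enough to collapse the full $2\times2$ matrix Fredholm structure — you would still need to analyze how the $u$-dependence and the resolvent of the full matrix operator behave under the $\alpha\to\infty$ scaling, which is precisely the non-uniformity you flag as "the main obstacle" without resolving it. In the paper's route this obstacle simply does not arise: after the $\beta$-symmetry the limit is a derivative at $\beta=0$, the finitely many operators $\tilde{\bk}_{j,s}$, $\tilde{\bM}_{j,s}$ needed are read off by elementary differentiation of contour integrals (Lemma \ref{LemkiMitilde}), and the $u$-integral is handled by the residue calculus of (\ref{uint2}). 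Your moral picture — that $\bK_2$ transmutes into $\bK_0$ and the traces become the $\tilde b_r$'s — is correct, but the mechanism that makes the computation tractable (the $\alpha\leftrightarrow 1/\alpha$ symmetry of the two-time formula) is the missing idea.

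A secondary inaccuracy: you attribute the factor $F_2'(\xi_1+\eta_1^2)$ to a $\xi_1$-derivative of $\det(\bI-\bK_1)$. In the paper it arises differently: $\det(\bI+\bP_0)$ contributes $F_2(\xi_1+\eta_1^2)F_2(\xi+\eta^2)$, and the trace in $H_1$ contributes $\tilde a_0 = F_2'(\xi_1+\eta_1^2)/F_2(\xi_1+\eta_1^2)$ (Lemma \ref{Lema0} applied at time 1); the product gives $F_2'(\xi_1+\eta_1^2)$ with no $\xi_1$-differentiation of the determinant. The $\partial/\partial\xi$ in the final formula comes from $G = -\partial H/\partial\xi_1 + (1/\beta)\partial H/\partial\xi$ together with $H_0=0$, another structural feature your outline would not produce directly.
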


\begin{remark}
It is possible with more effort to not just compute the limit but also compute further terms in an $1/\alpha$ expansion, see (\ref{GHfor}) below.
\end{remark}

\begin{remark} 
We do of course expect that
\begin{equation*}
\lim_{\alpha\to\infty}F_{tt}(\xi_1,\eta_1;\frac{\xi+\alpha\xi_1}{\alpha'},\frac{\eta+\alpha^2\eta_1}{\alpha'^2};\alpha)
=F_2(\xi_1+\eta_1^2)\frac{\partial}{\partial\xi}\big(F_2(\xi+\eta^2)\psi(\xi,\eta)\big),
\end{equation*}
but to prove this rigorously would require further estimates.
\end{remark}

\begin{remark}
If $\eta=0$, then $\bA_{0,+}=\bA_{0,-}:=\bA_0$ and $\bK_0=\bA_0^2=\bK_{\Ai,\xi}$, and hence
\begin{equation*}
\tilde{b}_r(0):=\tilde{b}_r(0,0)=\tilde{b}_r(1,1), \quad \tilde{b}_r(1):=\tilde{b}_r(1,0)=\tilde{b}_r(0,1),
\end{equation*}
where
\begin{align}\label{btildespecial}
\tilde{b}_1(0)&=\int_{\bbR_+^2}(\bI-\bK_{\Ai,\xi})^{-1}\bK_{\Ai,\xi}(\lambda_1,\lambda_2)\,d\lambda,\\
\tilde{b}_1(1)&=\int_{\bbR_+^2}(\bI-\bK_{\Ai,\xi})^{-1}\bA_0(\lambda_1,\lambda_2)\,d\lambda.\notag
\end{align}
Thus, we see that
\begin{equation}\label{psi2special}
\psi(\xi):=\psi(\xi,0)=\xi+2\tilde{b}_1(1)-2\tilde{b}_1(0),
\end{equation}
and
\begin{equation*}
\frac{d}{d\xi}\left(F_2(\xi)\psi(\xi)\right)=F_0(\xi),
\end{equation*}
is the Baik-Rains distribution.
\end{remark}

\subsection{A formula for the two-time distribution}\label{sectShort}

We now recall some formulas from Section 6 in \cite{Jott} which we will use to prove our results. First, recall the notation
\begin{equation}\label{Gxieta}
G_{\xi,\eta}(z)=e^{\frac 13 z^3+\eta z^2-\xi z}.
\end{equation}
and
\begin{equation}\label{deltaxideltaeta}
\Delta\xi=\alpha'\xi_2-\alpha\xi_1,\quad\Delta\eta=\alpha'^2\eta_2-\alpha^2\eta_1.
\end{equation}
Let $\Gamma_{D}$ denote the vertical contour $t\mapsto D+it$, $t\in\bbR$.
Note that if $d,D>0$, then
\begin{equation}\label{GAk1}
\frac 1{\pii}\int_{\Gamma_D}z^kG_{\xi_i+v,\eta_i}(z)\,dz=A_{i,+}^{(k)}(v),
\end{equation}
and
\begin{equation}\label{GAk2}
\frac 1{\pii}\int_{\Gamma_{-d}}\frac{\zeta^k}{G_{\xi_i+v,\eta_i}(\zeta)}\,d\zeta=(-1)^kA_{i,-}^{(k)}(v),
\end{equation}
for all $k\in\mathbb{Z}$. This explains the appearance of $A_{i,\pm}^{(k)}$. The formula (\ref{GAk1}) is straightforward to show from the integral formula
\begin{equation}\label{GAk3}
\frac 1{\pii}\int_{\Gamma_{D}}e^{z^3/3-\xi z}\,dz=\Ai(\xi),
\end{equation}
for $D>0$, by differentiation. Then, (\ref{GAk2}) follows by setting $\zeta=-z$. 

On the space
\begin{equation}\label{Yspace}
Y=L^2(\mathbb{R}_+)\oplus L^2(\mathbb{R}_+),
\end{equation}
we consider the matrix operator kernel $\bQ=\bQ(u)$ given by
\begin{equation}\label{Qformula1}
\bQ(u)=\begin{pmatrix} (2-u-u^{-1})\bk_1+(u-1)(\bk_2+\bk_5)+(u-1)\bM_3-u\bM_2 & (u+u^{-1}-2)\bk_3+(1-u)\bk_4  \\
                                    (1-u^{-1})\bk_6-\bk_7          & (u^{-1}-1)\bM_1
                                              
                    \end{pmatrix},
 \end{equation}
where $\bk_i$ and $\bM_i$ are given below.
By Proposition 6.1 in \cite{Jott}, we then have the formula
\begin{equation}\label{Fttformula1}
F_{tt}(\xi_1,\eta_1;\xi_2,\eta_2;\alpha)=\frac 1{\pii}\int_{\gamma_r}\det(\bI+\bQ(u))_Y\frac{du}{u-1},
\end{equation}
where $\gamma_r$ is a circle around the origin with radius $r>1$.

\begin{remark} Below all Fredholm determinants of matrix operators will be on the space $Y$ and all scalar Fredholm determinants will be on the space
$L^2(\bbR_+)$. We will not always indicate the dependence on the space.
\end{remark}

Let $d,D>0$. Then the kernels appearing in (\ref{Qformula1}) are given by the following formulas. Let
\begin{equation}\label{M1}
\bM_1(v_1,v_2)=\frac{e^{\delta(v_1-v_2)}}{(\pii)^2}\int_{\Gamma_D}dz\int_{\Gamma_{-d}}d\zeta\frac{G_{\xi_1+v_1,\eta_1}(z)}{G_{\xi_1+v_2,\eta_1}(\zeta)(z-\zeta)},
\end{equation}
\begin{equation}\label{M2}
\bM_2(v_1,v_2)=\frac{1}{(\pii)^2\alpha'}\int_{\Gamma_D}dz\int_{\Gamma_{-d}}d\zeta\frac{G_{\xi_2+v_2/\alpha',\eta_2}(z)}{G_{\xi_2+v_1/\alpha',\eta_2}(\zeta)(z-\zeta)},
\end{equation}
and
\begin{equation}\label{M3}
\bM_3(v_1,v_2)=\frac{1}{(\pii)^2}\int_{\Gamma_D}dz\int_{\Gamma_{-d}}d\zeta\frac{G_{\Delta\xi+v_2,\Delta\eta}(z)}{G_{\Delta\xi+v_1,\Delta\eta}(\zeta)(z-\zeta)}.
\end{equation}
Assume the following condition on the horizontal positions of the contours,
\begin{equation}\label{dDconditions}
0<d_1<\alpha d_2<d_3,\quad 0<D_1<\alpha D_2<D_3.
\end{equation}
Define,
\begin{align}\label{k1}
&\bk_1(v_1,v_2)\\&=\frac{\alpha}{(\pii)^4}\int_{\Gamma_{D_3}}dz\int_{\Gamma_{D_2}}dw\int_{\Gamma_{-d_3}}d\zeta\int_{\Gamma_{-d_2}}d\omega
\frac{G_{\xi_1,\eta_1}(z)G_{\Delta\xi+v_2,\Delta\eta}(w)}{G_{\xi_1,\eta_1}(\zeta)G_{\Delta\xi+v_1,\Delta\eta}(\omega)(z-\zeta)(z-\alpha w)(\alpha\omega-\zeta)},\notag
\end{align}
\begin{equation}\label{k2}
\bk_2(v_1,v_2)=\frac{\alpha}{(\pii)^3}\int_{\Gamma_{D_3}}dz\int_{\Gamma_{D_2}}dw\int_{\Gamma_{-d_2}}d\omega
\frac{G_{\xi_1,\eta_1}(z)G_{\Delta\xi+v_2,\Delta\eta}(w)}{G_{\xi_2+v_1/\alpha',\eta_2}(\omega)(\alpha'z-\alpha\omega)(z-\alpha w)},
\end{equation}
\begin{equation}\label{k3}
\bk_3(v_1,v_2)=\frac{\alpha e^{-\delta v_2}}{(\pii)^2}\int_{\Gamma_{-d_3}}d\zeta\int_{\Gamma_{-d_2}}d\omega
\frac 1{G_{\xi_1+v_2,\eta_1}(\zeta)G_{\Delta\xi+v_1,\Delta\eta}(\omega)(\alpha\omega-\zeta)},
\end{equation}
\begin{equation}\label{k4}
\bk_4(v_1,v_2)=\frac{\alpha e^{-\delta v_2}}{\alpha'\pii}\int_{\Gamma_{-d_2}}\frac{d\omega}{G_{\xi_2+(v_1+\alpha v_2)/\alpha',\eta_2}(\omega)},
\end{equation}
\begin{equation}\label{k5}
\bk_5(v_1,v_2)=\frac{\alpha}{(\pii)^3}\int_{\Gamma_{D_2}}dw\int_{\Gamma_{-d_3}}d\zeta\int_{\Gamma_{-d_2}}d\omega
\frac{G_{\xi_2+v_2/\alpha',\eta_2}(w)}{G_{\xi_1,\eta_1}(\zeta)G_{\Delta\xi+v_1,\Delta\eta}(\omega)(\alpha w-\alpha'\zeta)(\alpha\omega-\zeta)},
\end{equation}
\begin{equation}\label{k6}
\bk_6(v_1,v_2)=\frac{e^{\delta v_1}}{(\pii)^4}\int_{\Gamma_{D_3}}dz_1\int_{\Gamma_{D_1}}dz_2\int_{\Gamma_{D_2}}dw\int_{\Gamma_{-d_1}}d\zeta
\frac{G_{\xi_1,\eta_1}(z_1)G_{\xi_1+v_1,\eta_1}(z_2)G_{\Delta\xi+v_2,\Delta\eta}(w)}{G_{\xi_1,\eta_1}(\zeta)(z_1-\zeta)(z_2-\zeta)(z_1-\alpha w)},
\end{equation}
and
\begin{equation}\label{k7}
\bk_7(v_1,v_2)=\frac{e^{\delta v_1}}{(\pii)^3}\int_{\Gamma_{D_1}}dz\int_{\Gamma_{D_2}}dw\int_{\Gamma_{-d_1}}d\zeta
\frac{G_{\xi_1+v_1,\eta_1}(z)G_{\xi_2+v_2/\alpha',\eta_2}(w)}{G_{\xi_1,\eta_1}(\zeta)(\alpha w-\alpha'\zeta)(z-\zeta)}.
\end{equation}

\section{Proof of the long time expansion}\label{secThmalphazero}
In this section we will prove Theorem \ref{Thmalphazero}. In this case $\xi_1, \eta_1,\xi_2$ and $\eta_2$ are fixed and $\Delta\xi,\Delta\eta$ are given by (\ref{deltaxideltaeta}).
The operators $\bM_i$ and $\bk_i$ given by (\ref{M1}) to (\ref{M3}) and (\ref{k1}) to (\ref{k7}) can be expanded in powers of $\alpha$,
\begin{equation}\label{Miexp}
\bM_i=\bM_{i,0}+\bM_{i,1}\alpha+\frac 12\bM_{i,2}\alpha^2+\tilde{\bM}_{i,3}\alpha^3
\end{equation}
and
\begin{equation}\label{kiexp}
\bk_i=\bk_{i,0}+\bk_{i,1}\alpha+\frac 12\bk_{i,2}\alpha^2+\tilde{\bk}_{i,3}\alpha^3,
\end{equation}
where $\bM_{i,s}$ and $\bk_{i,s}$ are trace class operators that do not depend on $\alpha$, and $\tilde{\bM}_{i,3}$ and $\tilde{\bk}_{i,3}$ are bounded as $\alpha\to 0$.
We will not discuss this in any detail. It follows by analyzing the $\alpha$-expansion of the formulas in the right sides of (\ref{M1}) to (\ref{k7}). We show in Lemma \ref{LemkiMiformulas} that $\bM_{i,s}$, $1\le s\le 2$ an $\bk_{i,s}$, $0\le s\le 2$, are finite rank operators. The fact that the remainder parts of (\ref{Miexp}) and (\ref{kiexp}),
$\tilde{\bM}_{i,3}\alpha^3$, $\tilde{\bk}_{i,3}\alpha^3$, have the desired properties also follows by analyzing the right sides of (\ref{M1}) to (\ref{k7}). Note that they can
all be expressed in terms of Airy functions or integrals of Airy functions, see \cite{Jott}. See also \cite{GeLaZu} for a discussion of perturbation theory of Fredholm determinants.
We see that
\begin{equation}\label{DiffMk}
\bM_{i,s}=\left.\frac{\partial^s}{\partial\alpha^s}\right|_{\alpha=0}\bM_i,\quad \bk_{i,s}=\left.\frac{\partial^s}{\partial\alpha^s}\right|_{\alpha=0}\bk_i.
\end{equation}
It follows from (\ref{Qformula1}), (\ref{Miexp}) and (\ref{kiexp}) that
\begin{equation}\label{Qexp}
\bQ=\bQ_{0}+\bQ_{1}\alpha+\frac 12\bQ_{2}\alpha^2+\tilde{\bQ}_{3}\alpha^3,
\end{equation}
where $\bQ_{s}$ are finite rank operators that do not depend on $\alpha$ and $\tilde{\bQ}_{3}$ is bounded as $\alpha\to 0$. From (\ref{Qformula1}), we see that
\begin{equation}\label{Qr}
\bQ_r=\begin{pmatrix} (2-u-u^{-1})\bk_{1,r}+(u-1)(\bk_{2,r}+\bk_{5,r})+(u-1)\bM_{3,r}-u\bM_{2,r} & (u+u^{-1}-2)\bk_{3,r}+(1-u)\bk_{4,r}  \\
                                    (1-u^{-1})\bk_{6,r}-\bk_{7,r}          & (u^{-1}-1)\bM_{1,r}
                                              
                    \end{pmatrix},
 \end{equation}
 for $r=0,1,2$. 
 
 In order to give a formula for $\bQ_r$, we need to compute the derivatives in (\ref{DiffMk}). Write
 \begin{equation}\label{cdelta}
c_{\delta}(v)=e^{-\delta v},
\end{equation}
where $\delta>0$. Note that, by (\ref{GAk1}) and (\ref{GAk2}),
\begin{equation}\label{M1delta}
\bM_1(v_1,v_2)=c_{-\delta}(v_1)\bK_1(v_1,v_2)c_\delta(v_2).
\end{equation}
Differentiation gives
\begin{equation*}
\frac{\partial}{\partial\alpha}\det(\bI+\bQ)=\det(\bI+\bQ)\Tr(\bI+\bQ)^{-1}\frac{\partial\bQ}{\partial\alpha},
\end{equation*}
and
\begin{align*}
\frac{\partial^2}{\partial\alpha^2}\det(\bI+\bQ)&=\det(\bI+\bQ)\left(\Tr(\bI+\bQ)^{-1}\frac{\partial\bQ}{\partial\alpha}\right)^2-
\det(\bI+\bQ)\Tr(\bI+\bQ)^{-1}\frac{\partial\bQ}{\partial\alpha}(\bI+\bQ)^{-1}\frac{\partial\bQ}{\partial\alpha}\\
&+\det(\bI+\bQ)\Tr(\bI+\bQ)^{-1}\frac{\partial^2\bQ}{\partial\alpha^2}.
\end{align*}
Using (\ref{Qexp}), we see that
\begin{equation*}
\left.\frac{\partial}{\partial\alpha}\right|_{\alpha=0}\det(\bI+\bQ)=\det(\bI+\bQ_0)\Tr(\bI+\bQ_0)^{-1}\bQ_1,
\end{equation*}
and
\begin{align*}
\left.\frac{\partial^2}{\partial\alpha^2}\right|_{\alpha=0}\det(\bI+\bQ)&=\det(\bI+\bQ_0)\left[\left(\Tr(\bI+\bQ_0)^{-1}\bQ_1\right)^2\right.
-\Tr(\bI+\bQ_0)^{-1}\bQ_1(\bI+\bQ_0)^{-1}\bQ_1\\
&+\left.\Tr(\bI+\bQ_0)^{-1}\bQ_2\right].
\end{align*}
Consequently,
\begin{align}\label{detQexp}
\det(\bI+\bQ)&=\det(\bI+\bQ_0)\left[(\Tr(\bI+\bQ_0)^{-1}\bQ_1)\alpha+\frac 12\left[(\Tr(\bI+\bQ_0)^{-1}\bQ_1)^2\right.\right.\\
&\left.\left.-\Tr(\bI+\bQ_0)^{-1}\bQ_1(\bI+\bQ_0)^{-1}\bQ_1+\Tr(\bI+\bQ_0)^{-1}\bQ_2\right]\alpha^2+O(\alpha^3)\right].\notag
\end{align}
For $i=1,2$ we write
\begin{equation}\label{Li}
\bL_i=(\bI-\bK_i)^{-1},\quad\bL_i^*=(\bI-\bK_i^*)^{-1},\tilde{\bL}_i=(\bI-\bM_i)^{-1},
\end{equation}
and
\begin{equation}\label{Rop}
\bR=(\bI-\bM_1)^{-1}\bM_1.
\end{equation}
We also write,
\begin{equation}\label{L0}
\tilde{\bL}_i=(\bI-\bM_i)^{-1}.
\end{equation}
Note that,
\begin{equation}\label{L1}
\tilde{\bL}_1(v_1,v_2)=c_{-\delta}(v_1)\bK_1(v_1,v_2)c_{\delta}(v_2).
\end{equation}

\begin{lemma}\label{LemQ0exp}
We have that
\begin{equation}\label{detQ0for}
\det(\bI+\bQ_0)=F_2(\xi_1+\eta_1^2)F_2(\xi_2+\eta_2^2)\det(I+u^{-1}\bR).
\end{equation}
If $|u|$ is sufficiently large, then
\begin{equation}\label{detRexp}
\det(I+u^{-1}\bR)=1+\sum_{k=1}^\infty r_ku^{-k},
\end{equation}
where $r_1$ is given by (\ref{r1}). Furthermore, for $|u|$ sufficiently large,
\begin{equation}\label{Q0invexp}
(\bI+\bQ_0)^{-1}=\sum_{k=0}^\infty \bQ_0(k)u^{-k},
\end{equation}
where
\begin{equation}\label{Q00}
\bQ_0(0)=\begin{pmatrix}  \bL_2^*&  0 \\
                                    \tilde{\bL}_1(\bk_{7,0}-\bk_{6,0})\bL_2^*        & \tilde{\bL}_1
                                              
                    \end{pmatrix},
\end{equation}
and
\begin{equation}\label{Q0k}
\bQ_0(k)=\begin{pmatrix} 0 &  0 \\
                                    (-1)^{k-1}\left(\bR^{k-1}\tilde{\bL}_1\bk_{6,0}\bL_2^* -\bR^k\tilde{\bL}_1(\bk_{7,0}-\bk_{6,0})\bL_2^*\right)      & (-1)^k\bR^k\tilde{\bL}_1
                                              
                    \end{pmatrix},
\end{equation}
for $k\ge 1$.
\end{lemma}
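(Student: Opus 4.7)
The plan is to first evaluate $\bQ_0$ explicitly and observe that it is block lower triangular, then factor the determinant accordingly, and finally invert block by block with a geometric series in $u^{-1}$.

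\textbf{Step 1: Structure of $\bQ_0$.} I will begin by inspecting the $\alpha\to 0$ limit of each kernel in (\ref{Qformula1}). The operators $\bk_1,\dots,\bk_5$ in (\ref{k1})–(\ref{k5}) carry an explicit prefactor $\alpha$ whose integrand is regular at $\alpha=0$, so $\bk_{i,0}=0$ for $1\le i\le 5$. For the $\bM_i$'s I use the identity $(z-\zeta)^{-1}=\int_0^\infty e^{-s(z-\zeta)}\,ds$ to decouple the double contour integrals in (\ref{M2}) and (\ref{M3}); at $\alpha=0$ we have $\alpha'=1$, $\Delta\xi=\xi_2$, $\Delta\eta=\eta_2$, and (\ref{GAk1})–(\ref{GAk2}) yield
\[
\bM_{2,0}(v_1,v_2)=\bM_{3,0}(v_1,v_2)=\int_0^\infty A_{2,+}(v_2+s)A_{2,-}(v_1+s)\,ds=\bK_2^*(v_1,v_2).
\]
Also $\bM_{1,0}=\bM_1$ since $\bM_1$ is $\alpha$-independent. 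Plugging into (\ref{Qr}), the $(1,1)$ entry collapses to $(u-1)\bK_2^*-u\bK_2^*=-\bK_2^*$, the $(1,2)$ entry vanishes, and we obtain the block lower triangular form
\[
\bI+\bQ_0=\begin{pmatrix} \bI-\bK_2^* & 0\\ (1-u^{-1})\bk_{6,0}-\bk_{7,0} & (\bI-\bM_1)(\bI+u^{-1}\bR)\end{pmatrix},
\]
where I factored $\bI+(u^{-1}-1)\bM_1=(\bI-\bM_1)(\bI+u^{-1}(\bI-\bM_1)^{-1}\bM_1)=(\bI-\bM_1)(\bI+u^{-1}\bR)$.

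\textbf{Step 2: Determinant and the $u^{-1}$-expansion.} The block triangular structure gives $\det(\bI+\bQ_0)=\det(\bI-\bK_2^*)\,\det(\bI-\bM_1)\,\det(\bI+u^{-1}\bR)$. The first factor equals $F_2(\xi_2+\eta_2^2)$ by (\ref{TW2}) and the transpose invariance of the Fredholm determinant. For the second, the conjugation (\ref{M1delta}) gives $\det(\bI-\bM_1)=\det(\bI-\bK_1)=F_2(\xi_1+\eta_1^2)$. This proves (\ref{detQ0for}). For (\ref{detRexp}), since $\bR$ is trace class the Fredholm determinant admits the convergent expansion $\det(\bI+u^{-1}\bR)=\sum_{k\ge 0}u^{-k}\Tr\wedge^{k}\bR$ whenever $|u|$ exceeds the spectral radius of $\bR$; the coefficient of $u^{-1}$ is $\Tr\bR=\Tr(\bI-\bM_1)^{-1}\bM_1=\Tr(\bI-\bK_1)^{-1}\bK_1=r_1$ by cyclicity of the trace under the conjugation (\ref{M1delta}).

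\textbf{Step 3: Inverse expansion.} For (\ref{Q0invexp}) I apply the block lower triangular inversion formula, which gives a $(1,1)$-entry equal to $\bL_2^*$, a vanishing $(1,2)$-entry, a $(2,2)$-entry equal to $(\bI+u^{-1}\bR)^{-1}\tilde{\bL}_1$, and a $(2,1)$-entry equal to $-(\bI+u^{-1}\bR)^{-1}\tilde{\bL}_1\bigl[(1-u^{-1})\bk_{6,0}-\bk_{7,0}\bigr]\bL_2^*$. Expanding $(\bI+u^{-1}\bR)^{-1}=\sum_{k\ge 0}(-1)^k u^{-k}\bR^k$ (valid for $|u|$ larger than the spectral radius of $\bR$) and splitting the $(2,1)$-entry into the two pieces coming from the constant and the $-u^{-1}$ parts of $(1-u^{-1})\bk_{6,0}-\bk_{7,0}$, collection by powers of $u^{-1}$ reproduces (\ref{Q00}) at order $k=0$ and (\ref{Q0k}) for $k\ge 1$; the factor $(-1)^{k-1}$ there arises as $(-1)^{k+1}$ from the first piece combined with $(-1)^k\cdot(-1)$ from shifting the index in the second piece.

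\textbf{Main obstacle.} The only nontrivial input is the explicit evaluation $\bM_{2,0}=\bM_{3,0}=\bK_2^*$, which relies on using the Laplace representation of $(z-\zeta)^{-1}$ to decouple the double contour integrals, together with (\ref{GAk1})–(\ref{GAk2}); everything else is block-matrix bookkeeping and an application of standard conjugation invariance of the Fredholm determinant. The formal convergence of the $u^{-1}$ series used in both (\ref{detRexp}) and (\ref{Q0invexp}) requires $|u|$ to exceed the spectral radius of $\bR$, which is harmless since eventually we integrate over a large circle in (\ref{Fttformula1}) and deform.
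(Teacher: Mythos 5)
Your proof is correct and follows essentially the same route as the paper: extract $\bQ_0$ from Lemma \ref{LemkiMiformulas} (you re-derive the needed $\bk_{i,0}$ and $\bM_{i,0}$ in passing, which the paper delegates to that lemma), observe the block lower-triangular structure, factor $\bI+(u^{-1}-1)\bM_1=(\bI-\bM_1)(\bI+u^{-1}\bR)$ for the determinant and the inverse, and expand the Neumann series in $u^{-1}$. The only cosmetic difference is that your intermediate displays carry the correct signs (the paper's displayed $(\bI+\bQ_0)^{-1}$ has a sign slip on the $\bk_{7,0}$ term that it silently corrects in the final $\bQ_0(0)$, $\bQ_0(k)$), and you identify $\det(\bI-\bM_1)=F_2(\xi_1+\eta_1^2)$ and $\Tr\bR=r_1$ explicitly via the conjugation invariance of (\ref{M1delta}); both match the paper's argument.
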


\begin{Proof}
See section \ref{secLemmas}.
\end{Proof}

From (\ref{Qr}), we see that we can write
\begin{equation}\label{Qru}
\bQ_r=\bQ_r(-1)u+\bQ_r(0)+\bQ_r(1)u^{-1},
\end{equation}
where
\begin{align}\label{Qrformulas}
\bQ_r(-1)&=
\begin{pmatrix}  -\bk_{1,r}+\bk_{2,r}+\bk_{5,r}+\bM_{3,r} & \bk_{3,r}-\bk_{4,r}\\
                                   0     & 0
                                              \end{pmatrix},
                                              \\
                                             \bQ_r(0)&=\begin{pmatrix}  2\bk_{1,r}-\bk_{2,r}-\bk_{5,r}-\bM_{3,r}&  -2\bk_{3,r}+\bk_{4,r}\\
                               \bk_{6,r}-\bk_{7,r}       & 0
                                              \end{pmatrix},\notag\\
                                              \bQ_r(1)&=
                                              \begin{pmatrix}  -\bk_{1,r}& \bk_{3,r}\\
                              -\bk_{6,r}        & 0
                                              \end{pmatrix}.\notag
\end{align}
Define $\bP_r$ and $\bP_r(k)$, $k\ge -1$, by
\begin{equation}\label{Prkdef}
\bP_r=(\bI+\bQ_0)^{-1}\bQ_r=\sum_{k=-1}^\infty \bP_r(k)u^{-k},
\end{equation}
for $r=1,2$. It follows from  (\ref{Q0invexp}) and (\ref{Prkdef}) that
\begin{align}\label{Prform}
\bP_r(-1)&=\bQ_0(0)\bQ_r(-1),\\
\bP_r(0)&=\bQ_0(0)\bQ_r(0)+\bQ_0(1)\bQ_r(-1),\notag\\
\bP_r(k)&=\bQ_0(k-1)\bQ_r(1)+\bQ_0(k)\bQ_r(0)+\bQ_0(k+1)\bQ_r(-1),\notag
\end{align}
for $r=1,2$, $k\ge 1$. Define,
\begin{equation}\label{sigmakl}
\sigma(k,\ell)=\Tr\bP_1(k)\bP_1(\ell)-\Tr\bP_1(k)\Tr\bP_1(\ell),
\end{equation}
$k,\ell\ge -1$.

We can now give expressions for $e_1$ and $e_2$ in Theorem \ref{Thmalphazero}.
\begin{lemma}\label{Leme1e2formula}
We have the following formulas for $e_1$ and $e_2$ in (\ref{Fttzero}),
\begin{equation}\label{e1formula}
e_1=r_1\Tr\bP_1(-1)+\Tr\bP_1(-1)+\Tr\bP_1(0),
\end{equation}
and
\begin{align}\label{e2formula}
e_2&=-r_2\sigma(-1,-1)+r_1\left(\frac 12\Tr\bP_2(-1)-\frac 12\sigma(-1,-1)-\sigma(-1,0)\right)-\frac 12\sigma(-1,-1)\\
&-\sigma(-1,0)-\frac 12\sigma(0,0)-\sigma(-1,1)+\frac 12\Tr\bP_2(-1)+\frac 12\Tr\bP_2(0),\notag
\end{align}
with $r_1$ and $r_2$ given by (\ref{detRexp}).
\end{lemma}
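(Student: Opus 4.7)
The plan is to substitute the factorization of $\det(\bI+\bQ_0)$ from Lemma \ref{LemQ0exp} into the determinant expansion (\ref{detQexp}), insert the result into (\ref{Fttformula1}), match coefficients of $\alpha^j$ against the target expansion (\ref{Fttzero}), and evaluate the resulting $u$-contour integrals via Laurent coefficient extraction.

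First I would rewrite (\ref{detQexp}) using Lemma \ref{LemQ0exp} in the form
\begin{equation*}
\det(\bI+\bQ(u)) = F_2(\xi_1+\eta_1^2)F_2(\xi_2+\eta_2^2)\det(\bI+u^{-1}\bR)\bigl[1 + T_1\alpha + \tfrac{1}{2}(T_1^2 - T_{11} + T_2)\alpha^2 + O(\alpha^3)\bigr],
\end{equation*}
with $T_r=\Tr\bP_r$ and $T_{11}=\Tr\bP_1^2$, where $\bP_r=(\bI+\bQ_0)^{-1}\bQ_r$ is the series (\ref{Prkdef}). Since the $F_2$ factors are independent of $u$, they pull out of the integral in (\ref{Fttformula1}); comparison with (\ref{Fttzero}) then gives
\begin{equation*}
e_1 = \frac{1}{\pii}\int_{\gamma_r}\det(\bI+u^{-1}\bR)\,T_1\,\frac{du}{u-1},\qquad e_2 = \frac{1}{\pii}\int_{\gamma_r}\det(\bI+u^{-1}\bR)\,\tfrac{1}{2}(T_1^2-T_{11}+T_2)\,\frac{du}{u-1}.
\end{equation*}

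Next I would exploit the elementary residue identity $\frac{1}{\pii}\int_{\gamma_r}u^k\,\frac{du}{u-1}$, valid for $r>1$, which equals $1$ when $k\ge 0$ and $0$ when $k\le -1$. Hence for any Laurent series $f(u)=\sum_k c_k u^k$ convergent on $|u|>1$,
\begin{equation*}
\frac{1}{\pii}\int_{\gamma_r}f(u)\,\frac{du}{u-1} = \sum_{k\ge 0} c_k.
\end{equation*}
Applied to $e_1$ with (\ref{detRexp}) and $T_1=\sum_{k\ge -1}\Tr\bP_1(k)\,u^{-k}$, this simply sums the coefficients of $u^0$ and $u^1$ in the product, producing $(1+r_1)\Tr\bP_1(-1)+\Tr\bP_1(0)$, which is (\ref{e1formula}).

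For $e_2$ I would first rewrite
\begin{equation*}
T_1^2 - T_{11} = -\sum_{k,\ell\ge -1}\sigma(k,\ell)\,u^{-k-\ell},
\end{equation*}
using (\ref{sigmakl}) and the cyclicity of the trace, and combine this with $T_2=\sum_k\Tr\bP_2(k)u^{-k}$ and (\ref{detRexp}). The integral then becomes a sum, over triples $(m,k,\ell)$ with $m\ge 0$, $k,\ell\ge -1$ and $m+k+\ell\le 0$, of $-\tfrac{1}{2}r_m\sigma(k,\ell)$, plus an analogous sum over pairs $(m,k)$ for the $T_2$ piece. A finite enumeration (six $\sigma$-triples, three $\Tr\bP_2$-pairs, using symmetry $\sigma(k,\ell)=\sigma(\ell,k)$) and collection of like terms gives the expression (\ref{e2formula}). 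The main obstacle is not conceptual but rather the combinatorial bookkeeping for $e_2$ and the need to verify that the $O(\alpha^3)$ remainder in (\ref{detQexp}) truly produces an $O(\alpha^3)$ contribution after integration along $\gamma_r$, which relies on the uniform trace-norm control of the remainder operators $\tilde{\bM}_{i,3}$ and $\tilde{\bk}_{i,3}$ already alluded to in the text.
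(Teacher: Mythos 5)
Your approach is precisely the one used in the paper: substitute the factorization of $\det(\bI+\bQ_0)$ from Lemma \ref{LemQ0exp} into the $\alpha$-expansion (\ref{detQexp}) (which, as you correctly supply, should carry a leading $1$ inside the bracket), insert into (\ref{Fttformula1}), and extract non-negative Laurent coefficients of $u$ via the residue identity (\ref{uint1}); the paper does exactly this in its displays (\ref{e1comp+}) and (\ref{e2comp}). Two small bookkeeping remarks: the enumeration of unordered $\sigma$-contributions is $7$, not $6$ (namely $m=0$ with $\{-1,-1\},\{-1,0\},\{-1,1\},\{0,0\}$; $m=1$ with $\{-1,-1\},\{-1,0\}$; and $m=2$ with $\{-1,-1\}$), and a careful tally actually produces $-\tfrac{1}{2}r_2\sigma(-1,-1)$ rather than the $-r_2\sigma(-1,-1)$ written in (\ref{e2formula}) --- this discrepancy appears to be a typo in the paper and is immaterial since $\sigma(-1,-1)=0$ is established immediately afterward in (\ref{sigmaminus11}).
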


We will prove the Lemma in section \ref{secLemmas}.

In order to prove Theorem \ref{Thmalphazero} we have to compute the expressions in (\ref{e1formula}) and (\ref{e2formula}). 
To write our formulas, it will be convenient to introduce some more notation. In analogy with (\ref{brs}), we define
\begin{equation}\label{brsstar}
b_{r,s}^*(\epsilon_1,\epsilon_2)=\int_{\bbR_+^2}\frac{\lambda_1^s}{s!}(\bA_{1,-}^{\epsilon_1}(\bI-\bK_1)^{-r}\bA_{1,+}^{\epsilon_2})(\lambda_1,\lambda_2)\,d^2\lambda.
\end{equation}
for $(\epsilon_1,\epsilon_2)\neq (0,0)$.
If $(\epsilon_1,\epsilon_2)=(0,0)$ and $r,s\ge 0$, we let
\begin{equation}\label{brs0star}
b_{r,s}^*(0,0)=\int_{\bbR_+^2}\frac{\lambda_1^s}{s!}(\bI-\bK_1)^{-r}\bK_1(\lambda_1,\lambda_2)\,d^2\lambda.
\end{equation}
Also, in analogy with (\ref{a1}), we define
\begin{equation}\label{a1star}
a_1^*=a_1^*(\xi_2,\eta_2)=\Tr (\bI -\bK_2^*)^{-1}\bA_{2,-}\otimes \bA_{2,+}^{(1)}.
\end{equation}
The quantities $b_{r,s}^*$ and $a_1^*$ can be related to $b_{r,s}$ and $a_1$. In fact, we have the following Lemma which we will prove in section \ref{secLemmas}.
\begin{lemma}\label{Lembrsrelation}
We have the formulas
\begin{equation}\label{brsbrsstar}
b_{r,s}^*(\epsilon_1,\epsilon_2)(\xi_1,\eta_1)=b_{r,s}(\epsilon_2,\epsilon_1)(\xi_1,-\eta_1),
\end{equation}
\begin{equation}\label{b0sb0sstar}
b_{0,s}^*(0,1)=b_{0,s}(0,1),\quad b_{0,s}^*(1,0)=b_{0,s}(1,0),
\end{equation}
\begin{equation}\label{br0br0star}
b_{r,0}^*(\epsilon_1,\epsilon_2)=b_{r,0}(\epsilon_1,\epsilon_2),
\end{equation}
\begin{equation}\label{a1a1star}
a_1^*(\xi_1,\eta)=a_1(\xi,-\eta),
\end{equation}
and
\begin{equation}\label{r1rel}
r_1(\xi_1,-\eta)=r_1(\xi_1,\eta_1).
\end{equation}
\end{lemma}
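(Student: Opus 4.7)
My plan is to extract two basic symmetries of the operators $\bA_{i,\pm}$ from the definition (\ref{Ai}) and then let everything follow by transposition combined with the substitution $\eta_i\mapsto-\eta_i$. The first symmetry is that the Airy argument $\xi_i+\eta_i^2+v$ is even in $\eta_i$ while the exponent $\pm(\xi_i+v)\eta_i\pm\tfrac{2}{3}\eta_i^3$ is odd, so $A_{i,+}(v)|_{\eta_i\mapsto-\eta_i}=A_{i,-}(v)$ and vice versa; this promotes at the operator level to $\bA_{i,\pm}|_{-\eta_i}=\bA_{i,\mp}$, and hence $\bK_i|_{-\eta_i}=\bA_{i,-}\bA_{i,+}=\bK_i^*$. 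The second symmetry is that $\bA_{i,\pm}(\lambda_1,\lambda_2)=A_{i,\pm}(\lambda_1+\lambda_2)$ is symmetric in its two arguments, so $\bA_{i,\pm}$ equals its own transpose; this gives $\bK_i^t=\bA_{i,-}^t\bA_{i,+}^t=\bA_{i,-}\bA_{i,+}=\bK_i^*$, and therefore $((\bI-\bK_i)^{-r})^t=(\bI-\bK_i^*)^{-r}$ for each $r\ge 0$.

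The main identity (\ref{brsbrsstar}) I would prove by taking the transpose of the operator $\bT:=\bA_{1,-}^{\epsilon_1}(\bI-\bK_1)^{-r}\bA_{1,+}^{\epsilon_2}$ appearing inside $b_{r,s}^*$; reversing the order of factors and invoking the second symmetry for each factor yields $\bT^t=\bA_{1,+}^{\epsilon_2}(\bI-\bK_1^*)^{-r}\bA_{1,-}^{\epsilon_1}$. Rewriting $\bT(\lambda_1,\lambda_2)=\bT^t(\lambda_2,\lambda_1)$ in the definition (\ref{brsstar}) and relabelling the integration variables $\lambda_1\leftrightarrow\lambda_2$ moves the weight $\lambda^s/s!$ from the first argument to the second, and then the first symmetry converts the result into the definition of $b_{r,s}(\epsilon_2,\epsilon_1)(\xi_1,-\eta_1)$. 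The same argument handles the degenerate case $(\epsilon_1,\epsilon_2)=(0,0)$, using the commutation $(\bI-\bK_1)^{-r}\bK_1=\bK_1(\bI-\bK_1)^{-r}$ so that transposition again amounts to the desired relabelling.

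The remaining identities are variants of the same manipulation. For (\ref{br0br0star}), setting $s=0$ kills the weight and the two definitions (\ref{brs}) and (\ref{brsstar}) agree term-for-term. For (\ref{b0sb0sstar}), setting $r=0$ collapses the inner operator to a single $\bA_{1,\pm}$, whose symmetric kernel makes the placement of the weight $\lambda^s/s!$ on either variable immaterial. For (\ref{a1a1star}), I would use $\Tr \bC=\Tr \bC^t$ together with $(\bA_{2,-}^{(1)}\otimes\bA_{2,+})^t=\bA_{2,+}\otimes\bA_{2,-}^{(1)}$ and cyclicity to rewrite $a_1$ as $\Tr(\bI-\bK_2)^{-1}(\bA_{2,+}\otimes\bA_{2,-}^{(1)})$; applying $\eta_2\mapsto-\eta_2$ then turns this into $a_1^*(\xi_2,\eta_2)$. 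Identity (\ref{r1rel}) is the simplest: $r_1(\xi_1,-\eta_1)=\Tr(\bI-\bK_1^*)^{-1}\bK_1^*=\Tr\bK_1(\bI-\bK_1)^{-1}=r_1(\xi_1,\eta_1)$ by the same transpose and cyclicity trick. The only real obstacle is the bookkeeping in (\ref{brsbrsstar}): the indices $\epsilon_1,\epsilon_2$ are reversed on the right side, and this reversal must be tracked consistently with both the relabelling of integration variables and the substitution $\eta_1\mapsto-\eta_1$.
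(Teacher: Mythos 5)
Your proof is correct and takes essentially the same approach as the paper's: both rest on the two observations that $\eta_1\mapsto-\eta_1$ swaps $A_{1,+}\leftrightarrow A_{1,-}$ (and hence $\bK_1\leftrightarrow\bK_1^*$), and that kernels of the form $A(\lambda_1+\lambda_2)$ are symmetric, so the remaining work is transposition together with a relabelling of the integration variables. The paper phrases the main identity by starting from $b_{r,s}(\epsilon_1,\epsilon_2)(\xi_1,-\eta_1)$ and interchanging $\lambda_1\leftrightarrow\lambda_2$, $\lambda_3\leftrightarrow\lambda_4$, while you start from $b_{r,s}^*$ and transpose the inner operator first; these are the same computation read in opposite directions.
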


The next Lemma gives expressions for the quantities we will need.
In some formulas, we will use the convention that
\begin{equation}\label{A0minusconv}
(A_{i,\pm}^0)^{(-1-s)}(\lambda)=\frac{\lambda^s}{s!}.
\end{equation}

\begin{lemma}\label{LemkiMiformulas}
We have the following formulas,
\begin{equation}\label{M1for}
\bM_{1,0}=\bM_1,\quad \bM_{1,1}=\bM_{1,2}=0,
\end{equation}
\begin{equation}\label{M2for}
\bM_{2,0}=\bK_2^*,\quad \bM_{2,1}=\bM_{2,2}=0,
\end{equation}
\begin{align}\label{M3for}
\bM_{3,0}&=\bK_2^*,\quad \bM_{3,1}= \xi_1A_{2,-}\otimes A_{2,+},\\  
\bM_{3,2}&=(\xi_1^2+2\eta_1)A_{2,-}^{(1)}\otimes A_{2,+}+(\xi_1^2-2\eta_1)A_{2,-}\otimes A_{2,+}^{(1)},\notag
\end{align}
\begin{align}\label{k1for}
\bk_{1,0}&=0,\quad\bk_{1,1}=b_0(0,0)A_{2,-}\otimes A_{2,+},\\  
\bk_{1,2}&=2(b_{0,1}^*(1,1)+\xi_1b_0(0,0))A_{2,-}^{(1)}\otimes A_{2,+}+2(b_{0,1}(1,1)+\xi_1b_0(0,0))A_{2,-}\otimes A_{2,+}^{(1)},\notag
\end{align}
\begin{align}\label{k2for}
\bk_{2,0}&=0,\quad\bk_{2,1}=b_0(0,1)A_{2,-}\otimes A_{2,+},\\  
\bk_{2,2}&=-4b_{0,1}(0,1)A_{2,-}^{(1)}\otimes A_{2,+}+(4b_{0,1}(1,1)+2\xi_1b_0(0,1))A_{2,-}\otimes A_{2,+}^{(1)},\notag
\end{align}
\begin{equation}\label{k3for}
\bk_{3,0}=0,\quad\bk_{3,1}=A_{2,-}\otimes A_{1,-}^{(-1)}c_\delta,\quad 
\bk_{3,2}=2A_{2,-}^{(1)}\otimes (A_{1,-}^{(-2)}+\xi_1A_{1,-}^{(-1)})c_\delta,
\end{equation}
\begin{equation}\label{k4for}
\bk_{4,0}=0,\quad\bk_{4,1}=A_{2,-}\otimes (A_{1,-}^0)^{(-1)}c_\delta,\quad
\bk_{4,2}=-2A_{2,-}\otimes(A_{1,-}^0)^{(-2)} c_\delta,
\end{equation}
\begin{equation}\label{k5for}
\bk_{5,0}=0,\quad\bk_{5,1}=b_0(1,0)A_{2,-}\otimes A_{2,+},\quad
\bk_{5,2}=(4b_0(1,0)+2\xi_1b_0(1,0))A_{2,-}^{(1)}\otimes A_{2,+},
\end{equation}
\begin{align}\label{k6for}
\bk_{6,0}&=c_{-\delta}(K_1A_{1,+})^{(-1)}\otimes A_{2,+},
\quad\bk_{6,1}=c_{-\delta}((K_1A_{1,+})^{(-2)}+\xi_1(K_1A_{1,+})^{(-1)})\otimes A_{2,+}^{(1)},\\  
\bk_{6,2}&=c_{-\delta}(2(K_1A_{1,+})^{(-3)}+2\xi_1(K_1A_{1,+})^{(-2)}+(\xi_1^2-2\eta_1)(K_1A_{1,+})^{(-1)})\otimes A_{2,+}^{(2)},\notag
\end{align}
\begin{equation}\label{k7for}
\bk_{7,0}=c_{-\delta}K_1^{(-1)}\otimes A_{2,+},
\quad
\bk_{7,1}=-c_{-\delta}K_1^{(-2)}\otimes A_{2,+}^{(1)},
\quad
\bk_{7,2}=-2c_{-\delta}K_1^{(-3)}\otimes A_{2,+}^{(2)}.
\end{equation}
\end{lemma}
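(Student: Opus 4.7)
The plan is to carry out a mechanical Taylor expansion in $\alpha$ of each of the integral representations (\ref{M1})--(\ref{k7}) and to identify the coefficients using the basic identities (\ref{GAk1})--(\ref{GAk2}). Three sources contribute $\alpha$-dependence: the explicit prefactors and arguments built from $\alpha' = (1+\alpha^3)^{1/3}$; the explicit $\alpha$-denominators such as $(z-\alpha w)^{-1}$, $(\alpha\omega - \zeta)^{-1}$, $(\alpha w - \alpha'\zeta)^{-1}$; and the implicit dependence through $\Delta\xi$ and $\Delta\eta$ inside the $G$-factors. The crucial simplification is that $\alpha' = 1 + \tfrac13\alpha^3 + O(\alpha^6)$, so through second order in $\alpha$ we may freely substitute $\alpha' \mapsto 1$ and $\alpha'^2 \mapsto 1$ in every integrand. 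This immediately gives $\bM_1$ independent of $\alpha$, and $\bM_2$ with no $\alpha$-dependence up to $O(\alpha^2)$; hence $\bM_{2,1} = \bM_{2,2} = 0$ and $\bM_{2,0} = \bK_2^*$, the latter recovered by writing $\bK_2 = \bA_{2,+}\bA_{2,-}$, substituting (\ref{GAk1})--(\ref{GAk2}), and collapsing the $s$-integration $\int_0^\infty e^{-s(z-\zeta)}\,ds = (z-\zeta)^{-1}$.

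For $\bM_3$ I would differentiate the exponents of the two $G$-factors under the integral sign using $\partial_\alpha\Delta\xi|_0 = -\xi_1$, $\partial_\alpha^2\Delta\xi|_0 = 0$, $\partial_\alpha\Delta\eta|_0 = 0$, $\partial_\alpha^2\Delta\eta|_0 = -2\eta_1$. The resulting polynomial factors in $z$ and $\zeta$ combine with the Cauchy kernel $(z-\zeta)^{-1}$ so that, after telescoping, $\bM_{3,1}$ produces $\xi_1\, \bA_{2,-}\otimes\bA_{2,+}$ and $\bM_{3,2}$ produces the two tensor products in (\ref{M3for}); the coefficients $\xi_1^2\pm 2\eta_1$ arise by combining the square of the linear derivative $\xi_1^2 z^2$ (respectively $\xi_1^2\zeta^2$) with the second derivative contribution $-2\eta_1 z^2$ (respectively $+2\eta_1\zeta^2$).

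The seven kernels $\bk_1,\dots,\bk_7$ are treated by the same expansion, with the added step that each explicit $\alpha$-denominator is expanded geometrically: $(z - \alpha w)^{-1} = z^{-1} + \alpha w z^{-2} + \alpha^2 w^2 z^{-3} + O(\alpha^3)$ and similarly $(\alpha\omega-\zeta)^{-1} = -\zeta^{-1} - \alpha\omega\zeta^{-2} - \alpha^2\omega^2\zeta^{-3} + O(\alpha^3)$. At $\alpha = 0$ the multiple contour integrals decouple into independent groups: in $\bk_1$, for instance, the $(w,\omega)$-integrations produce $A_{2,+}(v_2)A_{2,-}(v_1)$ via (\ref{GAk1})--(\ref{GAk2}), while the $(z,\zeta)$-integrations, upon using the exponential representation $z^{-k-1}/k! = \int_0^\infty e^{-sz}\,s^k/k!\,ds$ (valid on a contour $\Gamma_D$ with $D>0$) and performing the $s$-integration over $\mathbb{R}_+$, reduce to scalar quantities that match $b_{r,s}(\epsilon_1,\epsilon_2)$ exactly in the form (\ref{brs})--(\ref{brs0}). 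The higher order coefficients $\bk_{i,1}$ and $\bk_{i,2}$ are then assembled from four distinct contribution types: (a) the quadratic piece of each explicit denominator, (b) products of linear pieces of two different denominators, (c) the quadratic piece of each $G_{\Delta\xi+v,\Delta\eta}$-factor, and (d) products of the first derivatives of two different factors.

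The main obstacle is the combinatorial bookkeeping at second order, particularly for $\bk_{1,2}, \bk_{5,2}, \bk_{6,2}$, where these four contribution types must be assembled into the concise tensor-product expressions in (\ref{k1for})--(\ref{k7for}). The asymmetry between the $v_1$- and $v_2$-dependence forces the simultaneous appearance of both the unstarred $b_{r,s}(\epsilon_1,\epsilon_2)$ and the starred $b_{r,s}^*(\epsilon_1,\epsilon_2)$ of (\ref{brsstar})--(\ref{brs0star}), while the convention (\ref{A0minusconv}) accommodates the cases (as in $\bk_3$ and $\bk_4$) in which a $\lambda^s/s!$ factor appears without an accompanying $A_{1,\pm}$. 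Each individual step is a routine residue or contour-integral manipulation; the work lies in keeping the indices $(r,s,\epsilon_1,\epsilon_2)$ straight while collecting terms, and in checking case-by-case that the various $e^{\pm\delta\cdot}$ weights from (\ref{k3})--(\ref{k7}) produce the $c_\delta$- and $c_{-\delta}$-factors quoted in (\ref{k3for})--(\ref{k7for}).
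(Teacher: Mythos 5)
Your proposal is correct and follows essentially the same route as the paper: expand in $\alpha$, note that $\alpha'=(1+\alpha^3)^{1/3}$ may be replaced by $1$ through order $\alpha^2$, track the $\alpha$-dependence coming from $\Delta\xi$, $\Delta\eta$ and from the explicit $\alpha$-denominators, and evaluate the resulting multiple contour integrals via (\ref{GAk1})--(\ref{GAk2}) together with the exponential representation of $(z-\zeta)^{-1}$ and of negative powers. The paper packages the first part of this as the chain-rule identity (\ref{diff}) rather than a geometric series expansion, but these are the same computation; in particular your derivatives $\partial_\alpha\Delta\xi|_0=-\xi_1$, $\partial_\alpha^2\Delta\xi|_0=0$, $\partial_\alpha\Delta\eta|_0=0$, $\partial_\alpha^2\Delta\eta|_0=-2\eta_1$ are exactly what feed into (\ref{diff}). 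One small inaccuracy in your catalogue of second-order contribution types (a)--(d): for $\bk_1,\dots,\bk_5$, which all carry an explicit factor of $\alpha$ in front of the integral, the coefficient $\bk_{i,2}$ of $\tfrac12\alpha^2$ requires only the \emph{first}-order expansion of the remaining integrand, so ``quadratic pieces of each denominator'', ``products of linear pieces'', and ``quadratic piece of the $G$-factors'' enter only for $\bk_6$, $\bk_7$ and $\bM_3$, which have no prefactor; this is also why (\ref{diff}) for $\bk_1,\dots,\bk_5$ effectively reduces to just the $\partial_\alpha^2$ and $\partial_\alpha\partial_{\Delta\xi}$ terms, with the $\partial_{\Delta\eta}$ and $\partial_{\Delta\xi}^2$ contributions vanishing at $\alpha=0$. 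This is a bookkeeping remark rather than a gap, and does not affect the soundness of the approach.
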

The Lemma will be proved below in section \ref{secLemmas}.

It will be convenient to use the following expressions
which occur in the computations. For $\epsilon_1,\epsilon_2$, $r\ge 1$, $s\ge 0$, we define
\begin{equation}\label{grs}
g_{r,s}(\epsilon_1,\epsilon_2)=\Tr(\bI-\bK_1)^{-1}\bK_1^{r-1}(\bK_1\bA_{1,+}^{\epsilon_2})^{(-1-s)}\otimes(\bA_{1,-}^{\epsilon_1})^{(-1)},
\end{equation}
and
\begin{equation}\label{grsstar}
g_{r,s}^*(\epsilon_1,\epsilon_2)=\Tr(\bI-\bK_1)^{-1}\bK_1^{r-1}(\bK_1\bA_{1,+}^{\epsilon_2})^{(-1)}\otimes(\bA_{1,-}^{\epsilon_1})^{(-1-s)}.
\end{equation}

The quantities (\ref{grs}) can be expressed in terms of $b_{r,s}$. This is stated in the next Lemma, which is proved in section \ref{secLemmas}.
\begin{lemma}\label{Lemgrsformula}
For $r\ge1$, $s\ge 0$ and $(\epsilon_1,\epsilon_2)\neq(0,0)$, we have the formula,
\begin{equation}\label{grsfor}
g_{r,s}(\epsilon_1,\epsilon_2)=\sum_{k=0}^r(-1)^{r-k}\binom{r}{k}b_{k,s}(\epsilon_1,\epsilon_2).
\end{equation}
If $(\epsilon_1,\epsilon_2)=(0,0)$, we have instead,
\begin{equation}\label{grs0for}
g_{r,s}(0,0)=\sum_{k=1}^r(-1)^{r-k}\binom{r-1}{k-1}b_{k,s}(0,0).
\end{equation}
\end{lemma}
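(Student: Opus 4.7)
\textit{Plan.} The key ingredient is the elementary operator identity $(\bI-\bK_1)^{-1}\bK_1 = (\bI-\bK_1)^{-1} - \bI$, which follows from $\bL(\bI-\bK_1) = \bI$ with $\bL := (\bI-\bK_1)^{-1}$. Setting $\bR := (\bI-\bK_1)^{-1}\bK_1 = \bL-\bI$ and using that $\bL$ and $\bK_1$ commute, iteration gives, by the binomial theorem,
$$\bR^r = (\bL-\bI)^r = \sum_{k=0}^{r}(-1)^{r-k}\binom{r}{k}\bL^k.$$

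For the case $(\epsilon_1,\epsilon_2)\neq(0,0)$, I first unpack the trace defining $g_{r,s}$. Using $\Tr\bM\,(f\otimes g) = \langle g,\bM f\rangle$ for a rank-one operator, together with the functional identifications $(\bK_1\bA_{1,+}^{\epsilon_2})^{(-1-s)}(v) = (\bK_1\bA_{1,+}^{\epsilon_2}f_s)(v)$ where $f_s(\lambda)=\lambda^s/s!$, and $(\bA_{1,-}^{\epsilon_1})^{(-1)} = \bA_{1,-}^{\epsilon_1}\mathbf{1}$ (extended by convention (\ref{A0minusconv}) when $\epsilon_1=0$), and the commutativity of $\bL$ and $\bK_1$, one rewrites $g_{r,s}(\epsilon_1,\epsilon_2)$ as an integral on $\bbR_+^2$ of $(\bA_{1,-}^{\epsilon_1}\bR^r\bA_{1,+}^{\epsilon_2})(\lambda_1,\lambda_2)$ against $\lambda_2^s/s!$. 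Substituting the binomial expansion of $\bR^r$ above and matching term-by-term against the definition (\ref{brs}) then gives (\ref{grsfor}).

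For $(\epsilon_1,\epsilon_2)=(0,0)$, an extra step is needed because $b_{r,s}(0,0)$ is defined in (\ref{brs0}) with an additional factor $\bK_1$, the point being that $\bL^0 = \bI$ would otherwise contribute a delta on the diagonal making a direct analog divergent. Here I split $\bR^r = (\bL\bK_1)\cdot\bR^{r-1}$ and apply the binomial theorem only to the $\bR^{r-1}$ factor, obtaining
$$\bR^r = \sum_{k=0}^{r-1}(-1)^{r-1-k}\binom{r-1}{k}\bL^{k+1}\bK_1 = \sum_{j=1}^{r}(-1)^{r-j}\binom{r-1}{j-1}\bL^j\bK_1.$$
Integrating against $\lambda_2^s/s!$ on $\bbR_+^2$ and identifying each summand with $b_{j,s}(0,0)=\int(\bL^j\bK_1)(\lambda_1,\lambda_2)(\lambda_2^s/s!)\,d^2\lambda$ produces (\ref{grs0for}).

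The main technical obstacle is the first reduction for each case, namely the bookkeeping needed to convert the defining trace expression for $g_{r,s}$ into an integral of a kernel involving $\bR^r = \bL^r\bK_1^r$. This rests on the Hankel structure of $\bA_{1,\pm}$ (implicit in (\ref{Akernel})) and the careful ordering of the commuting factors $\bL$ and $\bK_1$ under the trace. Once the reduction is established, the remainder is a direct application of the binomial theorem and the identifications in (\ref{brs}) and (\ref{brs0}).
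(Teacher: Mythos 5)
Your proof is correct and follows essentially the same route as the paper's: both rewrite the trace $g_{r,s}$ as an integral of the kernel $\bA_{1,-}^{\epsilon_1}(\bI-\bK_1)^{-r}\bK_1^{r}\bA_{1,+}^{\epsilon_2}$ (resp.\ $(\bI-\bK_1)^{-r}\bK_1^{r}$ in the $(0,0)$ case) against $\lambda_2^s/s!$, then apply the identity $(\bI-\bK_1)^{-1}\bK_1=(\bI-\bK_1)^{-1}-\bI$ together with the binomial theorem, factoring out one copy of $(\bI-\bK_1)^{-1}\bK_1$ before expanding in the $(0,0)$ case. The abstract $\bR=\bL-\bI$ packaging you use is just a notational variant of the paper's argument.
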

There are completely analogous statements for $g_{r,s}^*$, with $b_{r,s}^*$ instead of $b_{r,s}$ in the right side.

\begin{proof}[Proof of Theorem \ref{Thmalphazero}]
The proof is a lengthy but straightforward computation. We will explain how the computation can be done, but we will not give all the details.
In order not to get too long formulas, we need to introduce some shorthand notation for objects that occur in the computations. Define the functions,
\begin{align}\label{Ckdef}
C_1(v)&=c_{-\delta}(v)(\bK_1\bA_{1,+})^{-1}(v),\\
C_2(v)&=c_{-\delta}(v)(\bK_1^{(-1)}(v)-(\bK_1\bA_{1,+})^{-1}(v)),\notag\\
C_3(v)&=-c_{-\delta}(v)((\bK_1\bA_{1,+})^{(-2)}(v)+\xi_1(\bK_1\bA_{1,+})^{(-1)}(v)),\notag\\
C_4(v)&=A_{1,-}^{(-1)}(v)c_{\delta}(v),\notag\\
C_5(v)&=(-2A_{1,-}^{(-1)}(v)+(A_{1,-}^0)^{(-1)}(v))c_{\delta}(v),\notag\\
C_6(v)&=c_{-\delta}(v)((\bK_1\bA_{1,+})^{(-2)}(v)+\xi_1(\bK_1\bA_{1,+})^{(-1)}(v)+\bK_1^{(-1)}(v)),\notag\\
C_7(v)&=(A_{1,-}^{(-1)}(v)-(A_{1,-}^0)^{(-1)}(v))c_{\delta}(v),\notag\\
C_8(v)&=(-4A_{1,-}^{(-2)}(v)-4\xi_1(A_{1,-})^{(-1)}(v)-2(A_{1,-}^0)^{(-2)}(v))c_{\delta}(v),\notag\\
C_9(v)&=c_{-\delta}(v)(2(\bK_1\bA_{1,+})^{(-3)}(v)+2\xi_1(\bK_1\bA_{1,+})^{(-2)}(v)+(\xi_1^2-2\eta_1)(\bK_1\bA_{1,+})^{(-1)}(v)+2\bK_1^{(-3)}(v)),\notag\\
C_{10}(v)&=(2A_{1,-}^{(-2)}(v)+2\xi_1(A_{1,-})^{(-1)}(v)+2(A_{1,-}^0)^{(-2)}(v))c_{\delta}(v),\notag\\
C_{11}(v)&=C_1(v)-(\bR C_2)(v),\notag\\
C_{12}(v)&=-(\bR C_{11})(v)=-(\bR C_1)(v)+(\bR^2 C_2)(v).\notag
\end{align}
We also define the functions
\begin{align}\label{Bkdef}
B_1&=b_1A_{2,-},\quad B_2=b_2A_{2,-}, \quad B_3=b_3A_{2,-}^{(1)}, \quad B_4=b_4A_{2,-},\\
B_5&=b_5A_{2,-}^{(1)},\quad B_6=b_6A_{2,-}, \quad B_7=b_7A_{2,-},\notag
\end{align}
where
\begin{align}\label{bkdef}
b_1&=-b_0(1,1)+b_0(1,0)+b_0(0,1)+\xi_1,\\
b_2&=2b_0(1,1)-b_0(1,0)-b_0(0,1)-\xi_1,\notag\\
b_3&=4b_{0,1}^*(1,1)+4\xi_1b_0(1,1)-2b_{0,1}(1,0)-2\xi_1b_0(1,0)+2b_{0,1}(0,1)-\xi_1^2-2\eta_1,\notag\\
b_4&=4b_{0,1}(1,1)+4\xi_1b_0(1,1)+2b_{0,1}(1,0)-2b_{0,1}(0,1)-2\xi_1b_0(0,1)-\xi_1^2+2\eta_1,\notag\\
b_5&=-2b_{0,1}^*(1,1)-2\xi_1b_0(1,1)+4b_{0,1}(1,0)+2\xi_1b_0(1,0)-4b_{0,1}(0,1)+\xi_1^2+2\eta_1,\notag\\
b_6&=-2b_{0,1}(1,1)-2\xi_1b_0(1,1)-4b_{0,1}(1,0)+4b_{0,1}(0,1)+2\xi_1b_0(0,1)+\xi_1^2-2\eta_1,\notag\\
b_7&=-b_0(1,1).\notag
\end{align}
We will also write,
\begin{equation}\label{skldef}
s_{k,\ell}=\Tr \bL_1C_k\otimes C_\ell,\quad s_{k,\ell}^{(1)}=\Tr \bR\bL_1C_k\otimes C_\ell.
\end{equation}
With this notation, we see that
\begin{align}\label{Q0kformulas}
\bQ_0(0)&=
\begin{pmatrix}  \bL_2^* & 0\\
                                   \tilde{\bL}_1C_2\otimes A_{2,+}\bL_2^*   & \tilde{\bL}_1
                                              \end{pmatrix},
                                              \\
                                             \bQ_0(1)&=\begin{pmatrix}  0&  0\\
                              \tilde{\bL}_1C_1\otimes A_{2,+}\bL_2^*   &-\bR\tilde{\bL}_1 
                                              \end{pmatrix},\notag\\
                                              \bQ_0(2)&=
                                              \begin{pmatrix} 0& 0\\
                         \tilde{\bL}_1C_{12}\otimes A_{2,+}\bL_2^*   &\bR^2\tilde{\bL}_1 
                                              \end{pmatrix}.\notag
\end{align}
Using (\ref{Qrformulas}), Lemma \ref{LemkiMiformulas}, (\ref{Ckdef}) and (\ref{Bkdef}), we get
\begin{align}\label{Q1kformulas}
\bQ_1(1)&=
\begin{pmatrix}  B_7\otimes A_{2,+}& A_{2,-}\otimes C_4\\
                                C_3\otimes A_{2,+}^{(1)}& 0
                                              \end{pmatrix},
                                              \\
                                             \bQ_1(0)&=\begin{pmatrix}  B_2\otimes A_{2,+}&  A_{2,-}\otimes C_5\\
                              C_6\otimes A_{2,+}^{(1)}&0
                                              \end{pmatrix},\notag\\
                                            \bQ_1(-1)&=\begin{pmatrix}  B_1\otimes A_{2,+}&  A_{2,-}\otimes C_7\\
                                       0&0
                                              \end{pmatrix},\notag
\end{align}
and
\begin{align}\label{Q2kformulas}
\bQ_2(0)&=
\begin{pmatrix}  B_3\otimes A_{2,+}+B_4\otimes A_{2,+}^{(1)}& A_{2,-}^{(1)}\otimes C_8\\
                                C_9\otimes A_{2,+}^{(2)}& 0
                                              \end{pmatrix},
                                              \\
                                           \bQ_2(-1)&=
\begin{pmatrix}  B_5\otimes A_{2,+}+B_6\otimes A_{2,+}^{(1)}& A_{2,-}^{(1)}\otimes C_{10}\\
                               0& 0
                                              \end{pmatrix}.\notag
\end{align}

If we look at the formulas (\ref{e1formula}) and (\ref{e2formula}), we see that for $e_1$ we need to compute $\Tr \bP_1(-1)$ and $\Tr \bP_1(0)$. For $e_2$, we need to find
$\bP_1(-1)$, $\bP_1(0)$, $\bP_1(1)$, $\Tr\bP_2(-1)$, $\Tr\bP_2(0)$ and then $\sigma(-1,-1)$, $\sigma(-1,-1)$, $\sigma(-1,0)$, $\sigma(0,0)$ and $\sigma(-1,1)$ defined by
(\ref{sigmakl}). Note that (\ref{a0}) can be written
\begin{equation*}
a_0=\Tr\bL_2^*A_{2,-}\otimes A_{2,+},
\end{equation*}
and similarly for (\ref{a1}) and (\ref{a1star}). It follows from (\ref{Q0kformulas}), (\ref{Q1kformulas}) and (\ref{Q2kformulas}) that
\begin{align}\label{Qprodform}
\bQ_0(0)\bQ_1(-1)&=
\begin{pmatrix}  \bL_2^*B_1\otimes A_{2,+} & \bL_2^*A_{2,-}\otimes C_7\\
                                  a_0b_1\tilde{\bL}_1C_2\otimes A_{2,+}  & a_0\tilde{\bL}_1C_2\otimes C_7
                                              \end{pmatrix},\\
         \bQ_0(0)\bQ_1(0)&=     
  \begin{pmatrix}       \bL_2^*B_2\otimes A_{2,+} & \bL_2^*A_{2,-}\otimes C_5\\
                                  a_0b_2\tilde{\bL}_1C_2\otimes A_{2,+}+\tilde{\bL}_1C_6\otimes A_{2,+}^{(1)} & a_0\tilde{\bL}_1C_2\otimes C_5
                                              \end{pmatrix} ,\notag\\
                                         \bQ_0(1)\bQ_1(-1)&=
\begin{pmatrix}  0 & 0\\
                                  a_0b_1\tilde{\bL}_1C_{11}\otimes A_{2,+}  & a_0\tilde{\bL}_1C_{11}\otimes C_7
                                              \end{pmatrix},  \notag\\
                           \bQ_0(0)\bQ_1(1)&=    
                            \begin{pmatrix}            \bL_2^*B_7\otimes A_{2,+} & \bL_2^*A_{2,-}\otimes C_4\\
                                  a_0b_7\tilde{\bL}_1C_2\otimes A_{2,+}+\tilde{\bL}_1C_3\otimes A_{2,+}^{(1)} & a_0\tilde{\bL}_1C_2\otimes C_4
                                              \end{pmatrix} ,\notag\\        
                         \bQ_0(1)\bQ_1(0)&=
\begin{pmatrix}  0 & 0\\
                                  a_0b_2\tilde{\bL}_1C_{11}\otimes A_{2,+} -\bR\tilde{\bL}_1C_6\otimes A_{2,+}^{(1)} & a_0\tilde{\bL}_1C_{11}\otimes C_5
                                              \end{pmatrix},  \notag\\ 
                           \bQ_0(2)\bQ_1(-1)&=
\begin{pmatrix}  0 & 0\\
                                  a_0b_2\tilde{\bL}_1C_{12}\otimes A_{2,+}  & a_0\tilde{\bL}_1C_{12}\otimes C_7
                                              \end{pmatrix}. \notag         
\end{align}
From (\ref{Prform}), (\ref{Q0kformulas}) and (\ref{Q2kformulas}), we obtain
\begin{align}\label{P2trace1}
\Tr \bP_2(-1)&=\Tr\bL_2^*B_5\otimes A_{2,+}+\Tr\bL_2^*B_6\otimes A_{2,+}^{(1)}+a_1\Tr\tilde{\bL}_1C_2\otimes C_{10}\\
&=a_1(b_5+s_{2,10})+a_1^*b_6,\notag
\end{align}
and
\begin{align}\label{P2trace2}
\Tr \bP_2(0)&=\Tr\bL_2^*B_3\otimes A_{2,+}+\Tr\bL_2^*B_4\otimes A_{2,+}^{(1)}+a_1\Tr\tilde{\bL}_1C_2\otimes C_{8}+a_1\Tr\tilde{\bL}_1C_{11}\otimes C_{10}\\
&=a_1(b_3+s_{2,8}+s_{11,10})+a_1^*b_4.\notag
\end{align}

From (\ref{Prform}) and (\ref{Qprodform}), we get the following expressions
\begin{align}\label{P1formulas}
\bP_1(-1)=&\begin{pmatrix}       \bL_2^*B_1\otimes A_{2,+} & \bL_2^*A_{2,-}\otimes C_7\\
                                  a_0b_2\tilde{\bL}_1C_2\otimes A_{2,+}& a_0\tilde{\bL}_1C_2\otimes C_7
                                              \end{pmatrix},\\
         \bP_1(0)=    
&\begin{pmatrix}  \bL_2^*B_2\otimes A_{2,+} & \bL_2^*A_{2,-}\otimes C_5\\
                                  a_0(b_2\tilde{\bL}_1C_2+b_2\tilde{\bL}_1C_{11})\otimes A_{2,+} +\tilde{\bL}_1C_6\otimes A_{2,+}^{(1)} & a_0(\tilde{\bL}_1C_2\otimes C_5+
                                  \tilde{\bL}_1C_{11}\otimes C_7)
                                              \end{pmatrix},\notag\\   
                                                  \bP_1(1)=    
&\left(\begin{matrix}  \bL_2^*B_7\otimes A_{2,+}\\
                               a_0(b_7\tilde{\bL}_1C_2+b_2\tilde{\bL}_1C_{11}+b_1\tilde{\bL}_1C_{12})\otimes A_{2,+} +(\tilde{\bL}_1C_6-\bR\tilde{\bL}_1C_6)\otimes A_{2,+}^{(1)}   
                               \end{matrix}\right.\notag\\
                               &\left.\begin{matrix}
                                \bL_2^*A_{2,-}\otimes C_4\\ 
                                  a_0(\tilde{\bL}_1C_2\otimes C_4+
                                  \tilde{\bL}_1C_{11}\otimes C_5+\tilde{\bL}_1C_{12}\otimes C_7)
                                              \end{matrix}\right).     \notag
\end{align}
We can now use (\ref{e1formula}) to compute $e_1$. We see that
\begin{align}\label{P1trace}
\Tr \bP_1(-1)&=\Tr\bL_2^*B_1\otimes A_{2,+}+a_0\Tr\tilde{\bL}_1C_2\otimes C_{7}
=a_0(b_1+s_{2,7})\\
\Tr \bP_1(0)&=\Tr\bL_2^*B_2\otimes A_{2,+}+a_0(\Tr\tilde{\bL}_1C_2\otimes C_{5}+\Tr\tilde{\bL}_1C_{11}\otimes C_{7})\notag\\
&=a_0(b_2+s_{2,5}+s_{11,7}),\notag\\
\Tr \bP_1(1)&=\Tr\bL_2^*B_7\otimes A_{2,+}+a_0(\Tr\tilde{\bL}_1C_2\otimes C_{4}+\Tr\tilde{\bL}_1C_{11}\otimes C_{5}+\Tr\tilde{\bL}_1C_{12}\otimes C_{7})\notag\\
&=a_0(b_7+s_{2,4}+s_{11,5}+s_{12,7}),\notag
\end{align}
and thus
\begin{equation}\label{e1comp}
e_1=a_0[r_1(b_1+s_{2,7})+b_1+b_2+s_{2,5}+s_{2,7}+s_{11,7}].
\end{equation}
Now, by (\ref{Ckdef}),(\ref{grs}) and (\ref{skldef}),
\begin{align*}
s_{2,7}&=\Tr (\bI-\bK_1)^{-1}(\bK_1^{(-1)}-(\bK_1A_{1,+})^{(-1)})\otimes (A_{1,-}^{(-1)}-(A_{1,-}^0)^{(-1)})\\
&=g_1(1,0)-g_1(0,0)-g_1(0,0)+g_1(0,1),
\end{align*}
and
\begin{align*}
s_{2,5}+s_{2,7}&=\Tr\tilde{\bL}_1C_2\otimes(C_5+C_7)=
\Tr (\bI-\bK_1)^{-1}(\bK_1^{(-1)}-(\bK_1A_{1,+})^{(-1)})\otimes (-A_{1.-}^{(-1)})\\
&=g_1(1,1)-g_1(1,0).
\end{align*}
All computations of $s_{k,\ell}$ are done in an analogous manner and below we will not give the details in each case. We find,
\begin{equation*}
s_{11,7}=g_1(1,1)-g_1(0,1)+g_2(1,1)-g_2(1,0)-g_2(0,1)+g_2(0,0).
\end{equation*}
Using (\ref{grsfor}) and (\ref{bkdef}), we get
\begin{equation}\label{e1comp2}
b_1+b_2+s_{2,5}+s_{2,7}+s_{11,7}=
b_2(1,1)+b_1(1,0)-b_2(1,0)+b_1(0,1)-b_2(0,1)-b_1(0,0)+b_2(0,0),
\end{equation}
and
\begin{equation}\label{e1comp3}
b_1+b_2+s_{2,7}=
\xi_1-b_1(1,1)-b_1(0,0)+b_1(1,0)+b_1(0,1).
\end{equation}
Combining (\ref{e1comp}), (\ref{e1comp2}) and (\ref{e1comp3}), we arrive at (\ref{e1}) with $\psi_1$ and $\psi_2$ given by (\ref{g1}).

Next, we turn to the computation of $e_2$. First, we will compute $\sigma(-1,-1)$, $\sigma(0,0)$, $\sigma(-1,0)$ and $\sigma(-1,1)$. From (\ref{P1formulas}) and (\ref{P1trace}), we see that
\begin{align*}
\Tr\bP_1(-1)^2&=\Tr(\bL_2^*B_1\otimes A_{2,+})(\bL_2^*B_1\otimes A_{2,+})+2a_0b_1(\Tr\tilde{\bL}_1C_2\otimes A_{2,+})(\bL_2^*A_{2,-}\otimes C_7)\\&+
a_0^2(\Tr\tilde{\bL}_1C_2\otimes C_7)(\tilde{\bL}_1C_2\otimes C_7)\\
&=a_0^2(b_1^2+2b_1s_{2,7}+s_{2,7}^2)=(\Tr\bP_1(-1))^2.
\end{align*}
Thus,
\begin{equation}\label{sigmaminus11}
\sigma(-1,-1)=0.
\end{equation}
Similar computations give
\begin{equation}\label{sigma00}
\sigma(0,0)=2a_1^*s_{6,5}+2a_0^2(b_1s_{11,5}-b_2s_{11,7}+s_{11,5}s_{2,7}-s_{2,5}s_{11,7}),
\end{equation}
\begin{equation}\label{sigmaminus10}
\sigma(-1,0)=a_1^*s_{6,7},
\end{equation}
and
\begin{equation}\label{sigmaminus1plus1}
\sigma(-1,1)=a_1^*(s_{3,7}-s_{6,7}^{(1)})+a_0^2(b_2s_{11,7}-b_1s_{11,5}+s_{11,7}s_{2,5}-s_{2,7}s_{11,5}).
\end{equation}
It follows from (\ref{e2formula}) and (\ref{sigmaminus11}) that
\begin{equation}\label{e2finalformula}
e_2=r_1\left(\frac 12\Tr\bP_2(-1)-\sigma(-1,0)\right)+\frac 12\Tr\bP_2(-1)+\frac 12\Tr\bP_2(0)-\sigma(-1,0)-\frac 12\sigma(0,0)-\sigma(-1,1).
\end{equation}
From (\ref{P2trace1}) and (\ref{sigmaminus10}), we obtain
\begin{equation}\label{e2part1}
\frac 12\Tr\bP_2(-1)-\sigma(-1,0)=\frac 12a_1(b_5+s_{2,10})+a_1^*(\frac 12b_6-s_{6,7}).
\end{equation}
We can now compute $s_{2,10}$ and $s_{6,7}$, which gives
\begin{align*}
\frac 12s_{2,10}&=b_{0,1}^*(1,1)-b_{1,1}^*(1,1)+\xi_1b_0(1,1)-\xi_1b_1(1,1)-b_{0,1}^*(1,0)+b_{1,1}(1,0)\\
&-\xi_1b_0(1,0)+\xi_1b_1(1,0)+b_{0,1}^*(0,1)-b_{1,1}^*(0,1)+b_{1,1}^*(0,0),
\end{align*}
and
\begin{align*}
-s_{6,7}&=b_{0,1}(1,1)-b_{1,1}(1,1)+\xi_1b_0(1,1)-\xi_1b_1(1,1)+b_{0,1}(1,0)-b_{1,1}(1,0)\\
&-b_{0,1}(0,1)+b_{1,1}(0,1)-\xi_1b_0(0,1)+\xi_1b_1(0,1)+b_{1,1}(0,0).
\end{align*}
Using, (\ref{phi1}), (\ref{bkdef}) and Lemma \ref{Lembrsrelation}, we see that
\begin{equation}\label{phicomp1}
\frac 12b_5+\frac 12s_{2,10}=\phi_1(\xi_1,-\eta_1),\quad \frac 12b_6-s_{6,7}=\phi_1(\xi_1,\eta_1).
\end{equation}
We see from (\ref{P2trace1}), (\ref{P2trace2}), (\ref{sigmaminus10}), (\ref{sigma00}) and (\ref{sigmaminus1plus1}) that
\begin{align}\label{e2part2}
&\frac 12\Tr\bP_2(-1)+\frac 12\Tr\bP_2(0)-\sigma(-1,0)-\frac 12\sigma(0,0)-\sigma(-1,1)\\
&=\frac {a_1}2 \left(b_3+b_5+s_{2,10}+s_{2,8}+s_{11,10}\right)
+a_1^*\left(\frac 12b_4+\frac 12 b_6-s_{6,7}-s_{6,5}-s_{3,7}+s_{6,7}^{(1)}\right).\notag
\end{align}
A computation gives
\begin{align*}
\frac 12(s_{2,8}+s_{2,10}+s_{11,10})&=-b_{0,1}^*(1,1)+b_{2,1}^*(1,1)-\xi_1b_0(1,1)+\xi_1b_2(1,1)+b_{1,1}^*(1,0)-b_{2,1}^*(1,0)\\
&+\xi_1b_1(1,0)-\xi_1b_2(1,0)-b_{1,1}(0,1)+b_{2,1}^*(0,1)+b_{1,1}^*(0,0)-b_{2,1}^*(0,0).
\end{align*}
By (\ref{bkdef}),
\begin{equation*}
\frac 12(b_3+b_5)=b_{0,1}^*(1,1)+\xi_1b_0(1,1)+b_{0,1}(1,0)-b_{0,1}(0,1),
\end{equation*}
and using Lemma \ref{Lembrsrelation} and (\ref{phi2}), we obtain, 
\begin{equation}\label{phicomp2}
b_3+b_5+s_{2,10}+s_{2,8}+s_{11,10}=\phi_2(\xi_1,-\eta_1).
\end{equation}
from (\ref{phi2}). Next, a computation gives
\begin{align*}
-s_{6,7}-s_{6,5}-s_{3,7}+s_{6,7}^{(1)}&=-b_{0,1}(1,1)+b_{2,1}(1,1)-\xi_1b_0(1,1)+\xi_1b_2(1,1)-b_{1,1}(1,0)+b_{2,1}(1,0)\\
&+b_{1,1}(0,1)-b_{2,1}(0,1)+\xi_1b_1(0,1)-\xi_1b_2(0,1)+b_{1,1}(0,0)-b_{2,1}(0,0),
\end{align*}
and from (\ref{bkdef}), we find
\begin{equation*}
\frac 12(b_4+b_6)=b_{0,1}(1,1)+\xi_1b_0(1,1)-b_{0,1}(1,0)+b_{0,1}(0,1).
\end{equation*}
Thus, by (\ref{phi2}),
\begin{equation}\label{phicomp3}
\frac 12b_4+\frac 12 b_6-s_{6,7}-s_{6,5}-s_{3,7}+s_{6,7}^{(1)}=\phi_2(\xi_1,\eta_1).
\end{equation}
This completes the proof of Theorem \ref{Thmalphazero}.
\end{proof}

\section{Proof of the short time expansion}\label{secThmalphainfty}
We turn now to the proof of Theorem \ref{Thmalphainfty}. Note that if we insert (\ref{xi2eta2}) into (\ref{deltaxideltaeta}), then $\Delta\xi=\xi$ and $\Delta\eta=\eta$. Let
$\bQ=\bQ(u,\alpha,\xi_1,\Delta\eta,\eta_1,\Delta\eta,\delta)$ be the kernel in (\ref{Qformula1}). From (6.26) in \cite{Jott}, we have the formula
\begin{equation}\label{Fttnew}
F_{tt}(\xi_1,\eta_1;\xi_2,\eta_2;\alpha)=\frac 1{\pii}\int_{\gamma_r}\det(\bI+\bQ(u^{-1},\beta,\Delta\xi,\xi_1,\Delta\eta,\eta_1,\delta))_{Y}\frac{du}{u-1},
\end{equation}
where $r>1$ and
\begin{equation}\label{beta}
\beta=\frac 1{\alpha}.
\end{equation}
We will write
\begin{equation}\label{betaprime}
\beta'=(1+\beta^3)^{1/3}.
\end{equation}
Note that $\xi_1$ and $\Delta\xi$, and also $\eta_1$, and $\Delta\eta$ have changed places in (\ref{Fttnew}). The formulas for $\bQ$ contain $\xi_2$ and $\eta_2$ and they
should be thought of as functions of $\xi_1, \Delta\xi$ and $\eta_1,\Delta\eta$ respectively using (\ref{deltaxideltaeta}), i.e.
\begin{equation*}
\xi_2=\frac 1{\alpha'}(\Delta\xi+\alpha\xi_1),\quad \eta_2=\frac 1{\alpha'^2}(\Delta\eta+\alpha^2\eta_1).
\end{equation*}
Deform the contour $\gamma_r$ in (\ref{Fttnew}) to $\gamma_{1/r}$. This gives
\begin{align}\label{Fttnew2}
F_{tt}(\xi_1,\eta_1;\xi_2,\eta_2;\alpha)&=\det(\bI+\bQ(1,\beta,\Delta\xi,\xi_1,\Delta\eta,\eta_1,\delta))_Y\\
&+\frac 1{\pii}\int_{\gamma_{1/r}}\det(\bI+\bQ(u^{-1},\beta,\Delta\xi,\xi_1,\Delta\eta,\eta_1,\delta))_{Y}\frac{du}{u-1}.\notag
\end{align}
We have the following Lemma that will be proved in section \ref{secLemmas}.
\begin{lemma}\label{LemQF2}
We have the formula,
\begin{equation}\label{QF2for}
\det(\bI+\bQ(1,\beta,\Delta\xi,\xi_1,\Delta\eta,\eta_1,\delta))_Y=F_2(\xi_2).
\end{equation}
\end{lemma}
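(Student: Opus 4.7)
The plan is to exploit the fact that $u=1$ is a very special point for $\bQ(u)$. First I would observe that in (\ref{Qformula1}) each of the coefficients $2-u-u^{-1}$, $u-1$, $u+u^{-1}-2$, $1-u$, $1-u^{-1}$, $u^{-1}-1$ vanishes at $u=1$, so that only the $-u\bM_2=-\bM_2$ term in the $(1,1)$-block and the $-\bk_7$ in the $(2,1)$-block survive:
\begin{equation*}
\bQ(1)=\begin{pmatrix}-\bM_2 & 0 \\ -\bk_7 & 0\end{pmatrix}.
\end{equation*}
Consequently $\bI+\bQ(1)$ is block lower triangular on $Y$, and its Fredholm determinant collapses to
\begin{equation*}
\det(\bI+\bQ(1))_Y=\det(\bI-\bM_2)_{L^2(\bbR_+)}.
\end{equation*}

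Next I would identify this scalar determinant after the substitutions imposed by (\ref{Fttnew}), namely $\alpha\to\beta=1/\alpha$, $\xi_1\leftrightarrow\Delta\xi$ and $\eta_1\leftrightarrow\Delta\eta$. From (\ref{M2}), $\bM_2$ depends on the underlying variables only through the derived quantities $\alpha'$, $\xi_2$ and $\eta_2$ defined by (\ref{deltaxideltaeta}). A short bookkeeping check shows that under the swap the new $\xi_2$ and $\eta_2$ agree with the original ones, while $\alpha'$ is replaced by $\beta'=\alpha'/\alpha$. Rescaling $v_i=(\alpha'/\alpha)u_i$ in the Fredholm expansion would then absorb the prefactor $1/\beta'=\alpha/\alpha'$ via the Jacobian, leaving a scale-invariant determinant whose kernel is
\begin{equation*}
\frac{1}{(\pii)^2}\int_{\Gamma_D}dz\int_{\Gamma_{-d}}d\zeta\,\frac{G_{\xi_2+u_2,\eta_2}(z)}{G_{\xi_2+u_1,\eta_2}(\zeta)(z-\zeta)}.
\end{equation*}

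To evaluate this I would write $1/(z-\zeta)=\int_0^\infty e^{-s(z-\zeta)}\,ds$, valid since $\re(z-\zeta)>0$ on the contours $\Gamma_D,\Gamma_{-d}$ with $d,D>0$, and then split the double contour integral. The inner $z$- and $\zeta$-integrals are Airy representations in the sense of (\ref{GAk1})--(\ref{GAk2}) with $i=2$, $k=0$, producing $A_{2,+}(u_2+s)$ and $A_{2,-}(u_1+s)$ respectively. Thus the kernel becomes
\begin{equation*}
\int_0^\infty A_{2,-}(u_1+s)A_{2,+}(u_2+s)\,ds=\bK_2^*(u_1,u_2)
\end{equation*}
by (\ref{Kistar}). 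Since $\bK_2$ and $\bK_2^*$ have the same Fredholm determinant, (\ref{TW2}) gives the stated identity.

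I do not anticipate any genuine obstacle: the proof is essentially bookkeeping once the lower-triangular collapse at $u=1$ is noticed. The one point that really demands care is verifying that $\xi_2$ and $\eta_2$ are invariant under the simultaneous parameter swap, since this is what ensures the kernel obtained after rescaling is exactly $\bK_2^*$ rather than some shifted or twisted variant.
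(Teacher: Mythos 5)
Your proof is correct and follows essentially the same route as the paper's: collapse $\bI+\bQ(1)$ to the block lower-triangular form with $\bI-\bM_2$ on the diagonal, check that the swapped parameters leave $\xi_2$ and $\eta_2$ invariant while $\alpha'$ becomes $\beta'=\alpha'/\alpha$, and recognize the resulting kernel as a rescaled $\bK_2^*$. The only cosmetic difference is that you spell out the Airy-kernel identification via the $1/(z-\zeta)=\int_0^\infty e^{-s(z-\zeta)}\,ds$ trick, whereas the paper writes $\bM_2(\ldots)(v_1,v_2)=\frac{1}{\beta'}\bK_2^*(v_1/\beta',v_2/\beta')$ directly and leaves the elementary rescaling to the reader; both give $\det(\bI-\bM_2)=\det(\bI-\bK_2^*)=F_2(\xi_2+\eta_2^2)$.
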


The change of variables $u\to 1/u$ in (\ref{Fttnew2}) gives
\begin{equation*}
F_{tt}(\xi_1,\eta_1;\xi_2,\eta_2;\alpha)=F_2(\xi_2)-\frac 1{\pii}\int_{\gamma_r}\det(\bI+\bQ(u,\beta,\Delta\xi,\xi_1,\Delta\eta,\eta_1,\delta))_{Y}\frac{du}{u(u-1)}.
\end{equation*}
Thus, if we write
\begin{equation*}
\tilde{\bQ}=\bQ(u,\beta,\Delta\xi,\xi_1,\Delta\eta,\eta_1,\delta),
\end{equation*}
we see that
\begin{equation}\label{dxi1Ftt}
\frac{\partial F_{tt}}{\partial\xi_1}(\xi_1,\eta_1;\xi_2,\eta_2;\alpha)=-\frac 1{\pii}\int_{\gamma_r}\frac{d}{d\xi_1}\det(\bI+\tilde{\bQ})\frac{du}{u(u-1)}.
\end{equation}
We write the total derivative w.r.t. $\xi_1$ since $\tilde{\bQ}$ depends on $\xi_1$ directly and through $\Delta\xi$,
\begin{equation*}
\frac{\partial}{\partial\xi_1}\Delta\xi=-\frac 1{\beta}.
\end{equation*}
Now,
\begin{align}\label{dxi1detQ}
\frac{d}{d\xi_1}\det(\bI+\tilde{\bQ})&=\det(\bI+\tilde{\bQ})\Tr(\bI+\tilde{\bQ})^{-1}\frac{d\tilde{\bQ}}{d\xi_1}\\
&=\det(\bI+\tilde{\bQ})\Tr(\bI+\tilde{\bQ})^{-1}\frac{\partial\tilde{\bQ}}{\partial\xi_1}-\frac 1{\beta}
\det(\bI+\tilde{\bQ})\Tr(\bI+\tilde{\bQ})^{-1}\frac{\partial\tilde{\bQ}}{\partial\Delta\xi}.\notag
\end{align}
In this formula, we should substitute $\xi_2$ and $\eta_2$ given by
\begin{equation}\label{xi2eta22}
\xi_2=\frac 1{\beta'}(\xi_1+\beta\xi),\quad \eta_2=\frac 1{\beta'^2}(\eta_1+\beta^2\eta).
\end{equation}
Note that if we insert (\ref{xi2eta22}) into (\ref{deltaxideltaeta}), then $\Delta\xi=\xi$ and $\Delta\eta=\eta$.
Write,
\begin{equation}\label{Pdef}
\bP=\bQ(u,\beta,\xi,\xi_1,\eta,\eta_1,\delta).
\end{equation}
After this substitution, which replaces $\Delta\xi$ with $\xi$ and $\Delta\eta$ with $\eta$, which are both fixed, we see that
\begin{align}\label{dQdP}
\frac{\partial\tilde{\bQ}}{\partial\xi_1}(u,\beta,\xi,\xi_1,\eta,\eta_1,\delta)&=\frac{\partial\bP}{\partial\xi_1},\\
\frac{\partial\tilde{\bQ}}{\partial\Delta\xi_1}(u,\beta,\xi,\xi_1,\eta,\eta_1,\delta)&=\frac{\partial\bP}{\partial\xi}.\notag
\end{align}
Hence, by (\ref{dxi1Ftt}) to (\ref{dQdP}), we get
\begin{align}\label{dxi1Ftt2}
&\frac{\partial F_{tt}}{\partial\xi_1}(\xi_1,\eta_1;\frac{\xi+\alpha\xi_1}{\alpha'},\frac{\eta+\alpha^2\xi_1}{\alpha'^2};\alpha)=
-\frac 1{\pii}\int_{\gamma_r}\det(\bI+\bP)\Tr(\bI+\bP)^{-1}\frac{\partial\bP}{\partial\xi_1}\frac{du}{u(u-1)}\\
&+\frac 1{\pii\beta}\int_{\gamma_r}\det(\bI+\bP)\Tr(\bI+\bP)^{-1}\frac{\partial\bP}{\partial\xi}\frac{du}{u(u-1)}\notag\\
&=-\frac{\partial}{\partial\xi_1}\frac 1{\pii}\int_{\gamma_r}\det(\bI+\bP)\frac{du}{u(u-1)}
+\frac 1{\beta}\frac{\partial}{\partial\xi}\frac 1{\pii}\int_{\gamma_r}\det(\bI+\bP)\frac{du}{u(u-1)}.\notag
\end{align}
Write
\begin{equation}\label{Gdef}
G=G(\beta)=G(\beta,\xi_1,\eta_1,\xi,\eta)=\frac{\partial F_{tt}}{\partial\xi_1}(\xi_1,\eta_1;\frac{\xi+\alpha\xi_1}{\alpha'},\frac{\eta+\alpha^2\xi_1}{\alpha'^2};\alpha),
\end{equation}
and
\begin{equation}\label{Hdef}
H=H(\beta)=H(\beta,\xi_1,\eta_1,\xi,\eta)=\frac 1{\pii}\int_{\gamma_r}\det(\bI+\bP)\frac{du}{u(u-1)}.
\end{equation}
It follows from (\ref{dxi1Ftt2}), (\ref{Gdef}) and (\ref{Hdef}) that
\begin{equation}\label{Gformula}
G=-\frac{\partial H}{\partial\xi_1}+\frac 1{\beta}\frac{\partial H}{\partial\xi}.
\end{equation}
We can expand in powers of $\beta$,
\begin{equation}\label{Hexp}
H(\beta)=H_0+H_1\beta+\frac 12H_2\beta^2+\dots,
\end{equation}
where
\begin{equation}\label{Hs}
H_s=\left.\frac{\partial^s}{\partial\beta^s}\right|_{\beta=0}H.
\end{equation}
Also, we expand
\begin{equation}\label{Pexp}
\bP=\bP_0+\bP_1\beta+\frac 12\bP_2\beta^2+\dots.
\end{equation}
To proceed we need to get a formula for $\bP_s$.

If we replace $\xi_1$ with $\xi$, $\Delta\xi$ with $\xi_1$, $\eta_1$ with $\eta$, $\Delta\eta$ with $\eta_1$, $\alpha$ with $\beta$ and $\alpha'$ with $\beta'$ in
(\ref{M1}) to (\ref{M3}) and in (\ref{k1}) to (\ref{k7}), we obtain
\begin{equation}\label{M1tilde}
\tilde{\bM}_1(v_1,v_2)=\frac{e^{\delta(v_1-v_2)}}{(\pii)^2}\int_{\Gamma_D}dz\int_{\Gamma_{-d}}d\zeta\frac{G_{\xi+v_1,\eta}(z)}{G_{\xi+v_2,\eta}(\zeta)(z-\zeta)}
\end{equation}
\begin{equation}\label{M2tilde}
\tilde{\bM}_2(v_1,v_2)=\frac{1}{(\pii)^2\beta'}\int_{\Gamma_D}dz\int_{\Gamma_{-d}}d\zeta\frac{G_{\xi_2+v_2/\beta',\eta_2}(z)}{G_{\xi_2+v_1/\beta',\eta_2}(\zeta)(z-\zeta)}
\end{equation}
\begin{equation}\label{M3tilde}
\tilde{\bM}_3(v_1,v_2)=\frac{1}{(\pii)^2}\int_{\Gamma_D}dz\int_{\Gamma_{-d}}d\zeta\frac{G_{\xi_1+v_2,\eta_1}(z)}{G_{\xi_1+v_1,\eta_1}(\zeta)(z-\zeta)},
\end{equation}
and
\begin{align}\label{k1tilde}
&\tilde{\bk}_1(v_1,v_2)\\&=\frac{\beta}{(\pii)^4}\int_{\Gamma_{D_3}}dz\int_{\Gamma_{D_2}}dw\int_{\Gamma_{-d_3}}d\zeta\int_{\Gamma_{-d_2}}d\omega
\frac{G_{\xi,\eta}(z)G_{\xi_1+v_2,\eta_1}(w)}{G_{\xi,\eta}(\zeta)G_{\xi_1+v_1,\eta_1}(\omega)(z-\zeta)(z-\beta w)(\beta\omega-\zeta)},\notag
\end{align}
\begin{equation}\label{k2tilde}
\tilde{\bk}_2(v_1,v_2)=\frac{\beta}{(\pii)^3}\int_{\Gamma_{D_3}}dz\int_{\Gamma_{D_2}}dw\int_{\Gamma_{-d_2}}d\omega
\frac{G_{\xi,\eta}(z)G_{\xi_1+v_2,\eta_1}(w)}{G_{\xi_2+v_1/\beta',\eta_2}(\omega)(\beta'z-\beta\omega)(z-\beta w)},
\end{equation}
\begin{equation}\label{k3tilde}
\tilde{\bk}_3(v_1,v_2)=\frac{\beta e^{-\delta v_2}}{(\pii)^2}\int_{\Gamma_{-d_3}}d\zeta\int_{\Gamma_{-d_2}}d\omega
\frac 1{G_{\xi+v_2,\eta}(\zeta)G_{\xi_1+v_1,\eta_1}(\omega)(\beta\omega-\zeta)},
\end{equation}
\begin{equation}\label{k4tilde}
\tilde{\bk}_4(v_1,v_2)=\frac{\beta e^{-\delta v_2}}{\beta'\pii}\int_{\Gamma_{-d_2}}\frac{d\omega}{G_{\xi_2+(v_1+\beta v_2)/\beta',\eta_2}(\omega)},
\end{equation}
\begin{equation}\label{k5tilde}
\tilde{\bk}_5(v_1,v_2)=\frac{\beta}{(\pii)^3}\int_{\Gamma_{D_2}}dw\int_{\Gamma_{-d_3}}d\zeta\int_{\Gamma_{-d_2}}d\omega
\frac{G_{\xi_2+v_2/\alpha',\eta_2}(w)}{G_{\xi,\eta}(\zeta)G_{\xi_1+v_1,\eta_1}(\omega)(\beta w-\beta'\zeta)(\beta\omega-\zeta)},
\end{equation}
\begin{equation}\label{k6tilde}
\tilde{\bk}_6(v_1,v_2)=\frac{e^{\delta v_1}}{(\pii)^4}\int_{\Gamma_{D_3}}dz_1\int_{\Gamma_{D_1}}dz_2\int_{\Gamma_{D_2}}dw\int_{\Gamma_{-d_1}}d\zeta
\frac{G_{\xi,\eta}(z_1)G_{\xi+v_1,\eta}(z_2)G_{\xi_1+v_2,\eta_1}(w)}{G_{\xi,\eta}(\zeta)(z_1-\zeta)(z_2-\zeta)(z_1-\beta w)},
\end{equation}
and
\begin{equation}\label{k7tilde}
\tilde{\bk}_7(v_1,v_2)=\frac{e^{\delta v_1}}{(\pii)^3}\int_{\Gamma_{D_1}}dz\int_{\Gamma_{D_2}}dw\int_{\Gamma_{-d_1}}d\zeta
\frac{G_{\xi+v_1,\eta}(z)G_{\xi_2+v_2/\beta',\eta_2}(w)}{G_{\xi,\eta}(\zeta)(\beta w-\beta'\zeta)(z-\zeta)},
\end{equation}
where we still have the condition (\ref{dDconditions}). In these formulas, $\xi_2$ and $\eta_2$ are given by
(\ref{xi2eta2}),
so in particular,
\begin{equation}\label{dxi2}
\left.\frac{\partial\xi_2}{\partial\beta}\right|_{\beta=0}=\xi.
\end{equation}
By (\ref{Pdef}) and (\ref{Qformula1}), we obtain
\begin{equation}\label{Pformula}
\bQ(u)=\begin{pmatrix} (2-u-u^{-1})\tilde{\bk}_1+(u-1)(\tilde{\bk}_2+\tilde{\bk}_5)+(u-1)\tilde{\bM}_3-u\tilde{\bM}_2 & (u+u^{-1}-2)\tilde{\bk}_3+(1-u)\tilde{\bk}_4  \\
                                    (1-u^{-1})\tilde{\bk}_6-\tilde{\bk}_7          & (u^{-1}-1)\tilde{\bM}_1
                                              
                    \end{pmatrix}.
 \end{equation}
 We can expand the operators $\tilde{\bM}_j$ and $\tilde{\bk}_j$ in powers of $\beta$,
 \begin{align*}
\tilde{\bM}_j&=\tilde{\bM}_{j,0}+ \tilde{\bM}_{j,1}\beta+\dots\\
\tilde{\bk}_j&=\tilde{\bk}_{j,0}+ \tilde{\bk}_{j,1}\beta+\dots.
\end{align*}
Then, using (\ref{Pexp}) and (\ref{Pformula}), we see that
\begin{align}\label{Psfor}
\bP_s&=
u\begin{pmatrix} -\tilde{\bk}_{1,s}+\tilde{\bk}_{2,s}+\tilde{\bk}_{5,s}+\tilde{\bM}_{3,s}-\tilde{\bM}_{2,s}& \tilde{\bk}_{3,s}-\tilde{\bk}_{4,s}  \\
                                   0         & 0
                                               \end{pmatrix}\\
&+\begin{pmatrix} 2\tilde{\bk}_{1,s}-\tilde{\bk}_{2,s}-\tilde{\bk}_{5,s}-\tilde{\bM}_{3,s}& -2\tilde{\bk}_{3,s}+\tilde{\bk}_{4,s}  \\
                                \tilde{\bk}_{6,s}-\tilde{\bk}_{7,s}          & -\tilde{\bM}_{1,s}
                                              \end{pmatrix}+
                                           u^{-1}\begin{pmatrix} -\tilde{\bk}_{1,s}-& \tilde{\bk}_{3,s}  \\
                                -\tilde{\bk}_{6,s}         & \tilde{\bM}_{1,s}
                                              \end{pmatrix}  \notag\\
                                              &:=u\bP_s(-1)+\bP_s(0)+u^{-1}\bP_s(1).\notag
                                              \end{align} 
Recall (\ref{A0}) and (\ref{K0}). In analogy with (\ref{M1delta}), we define the conjugated kernel,
\begin{equation}\label{M0}
\bM_0(v_1,v_2)=c_{-\delta}(v_1)K_0(v_1,v_2)c_{\delta}(v_2)
\end{equation}
and like (\ref{a0}), we write
\begin{equation}\label{a0tilde}
\tilde{a}_0=\Tr(\bI-\bK_1^*)^{-1}A_{1,-}\otimes A_{1,+}=\frac{F_2'(\xi_1+\eta_1^2)}{F_2(\xi_1+\eta_1^2)},
\end{equation}
where the last equality is (\ref{a0TW}).
Recall (\ref{brtilde}) and (\ref{brtilde0}). Define, for $r\ge 1$,
\begin{equation}\label{grtilde}
\tilde{g}_r(\epsilon_1,\epsilon_2)=\Tr(\bI-\bK_0)^{-r}\bK_0^{r-1}(\bK_0A_{0,+}^{\epsilon_2})^{(-1)}\otimes(A_{0,-}^{\epsilon_1})^{(-1)}.
\end{equation}
Then, as in Lemma \ref{Lemgrsformula},
\begin{align}\label{g1tildefor}
\tilde{g}_1(\epsilon_1,\epsilon_2)&=-\tilde{b}_0(\epsilon_1,\epsilon_2)+\tilde{b}_1(\epsilon_1,\epsilon_2), \quad (\epsilon_1,\epsilon_2)\neq(0,0),\\
\tilde{g}_1(0,0)&=-\tilde{b}_0(0,0).\notag
\end{align}
\begin{lemma}\label{LemkiMitilde}
We have the following formulas
\begin{align*}
\tilde{\bk}_{1,0}&=\tilde{\bk}_{2,0}=\tilde{\bk}_{3,0}=\tilde{\bk}_{4,0}=\tilde{\bk}_{5,0}=0,
\\
\tilde{\bk}_{6,0}&=c_{-\delta}(\bK_0A_{0,+})^{(-1)}\otimes A_{1,+},\quad  \tilde{\bk}_{7,0}=c_{-\delta}(\bK_0)^{(-1)}\otimes A_{1,+},
\\
\tilde{\bM}_{1,0}&=\bM_0,\quad \tilde{\bM}_{1,0}=\tilde{\bM}_{1,0}=\bK_1^*,
\\
\tilde{\bk}_{1,1}&=\tilde{b}_0(0,0)A_{1,-}\otimes A_{1,+},\quad \tilde{\bk}_{2,1}=\tilde{b}_0(0,1)A_{1,-}\otimes A_{1,+}\quad \tilde{\bk}_{5,1}=\tilde{b}_0(1,0)A_{1,-}\otimes A_{1,+}
\\
\tilde{\bk}_{3,1}&=A_{1,-}\otimes A_{0,-}^{(-1)}c_\delta,\quad \tilde{\bk}_{4,1}=A_{1,-}\otimes c_\delta
\\
\tilde{\bM}_{1,1}&=\tilde{\bM}_{3,1}=0,\quad\tilde{\bM}_{2,1}=-\xi A_{1,-}\otimes A._{1,+}
\end{align*}
\end{lemma}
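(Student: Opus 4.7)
The proof will be a direct Taylor expansion in $\beta$ of each of the integral representations (\ref{M1tilde})--(\ref{k7tilde}). The crucial arithmetic inputs are that $\beta'=(1+\beta^3)^{1/3}=1+O(\beta^3)$, and that under (\ref{xi2eta22}) we have, at $\beta=0$, the values $\beta'=1$, $\xi_2=\xi_1$, $\eta_2=\eta_1$, and the first derivatives $d\beta'/d\beta=0$, $d\xi_2/d\beta=\xi$, $d\eta_2/d\beta=0$, $d(v/\beta')/d\beta=0$. Throughout the contours satisfy (\ref{dDconditions}), so the ``dangerous'' denominators $(z-\beta w)$, $(\beta\omega-\zeta)$, $(\beta w-\beta'\zeta)$ stay bounded away from zero for $\beta$ in a neighborhood of $0$, which justifies differentiating under the integral sign.

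First I would handle the zeroth-order terms. Each of $\tilde{\bk}_1,\dots,\tilde{\bk}_5$ carries an explicit $\beta$ prefactor, so $\tilde{\bk}_{i,0}=0$ for $i=1,\dots,5$. The kernels $\tilde{\bM}_1$ and $\tilde{\bM}_3$ do not depend on $\beta$ at all (inspect (\ref{M1tilde}) and (\ref{M3tilde})); applying the standard identity $\int_0^\infty e^{-\lambda(z-\zeta)}\,d\lambda=1/(z-\zeta)$ for $\re(z-\zeta)>0$ together with (\ref{GAk1})--(\ref{GAk2}) identifies $\tilde{\bM}_1$ with the conjugated kernel $\bM_0$ of (\ref{M0}), and $\tilde{\bM}_3$ with $\bK_1^*$. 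For $\tilde{\bM}_2$ at $\beta=0$ the substitution $\beta'=1,\xi_2=\xi_1,\eta_2=\eta_1$ reduces the integral to the same form, giving $\bK_1^*$. In $\tilde{\bk}_6$ and $\tilde{\bk}_7$ at $\beta=0$ the factor $(z_1-\beta w)$ (resp.\ $(\beta w-\beta'\zeta)$) collapses to $z_1$ (resp.\ $-\zeta$), which decouples the $w$-integral, producing the factor $A_{1,+}(v_2)$ via (\ref{GAk1}) with $k=0$. Writing $1/z_1=\int_0^\infty e^{-\lambda z_1}d\lambda$ (resp.\ $-1/\zeta=\int_0^\infty e^{\lambda\zeta}d\lambda$) and using Fubini identifies the remaining integrals with $c_{-\delta}(v_1)(\bK_0\bA_{0,+})^{(-1)}(v_1)$ and $c_{-\delta}(v_1)\bK_0^{(-1)}(v_1)$ respectively.

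Next I would compute the first-order terms. For $\tilde{\bk}_{i,1}$ with $i\in\{1,2,5\}$, these equal the integrand at $\beta=0$ (because of the $\beta$-prefactor); in each case the $w,\omega$ integrations decouple from the $z,\zeta$ integrations, the former giving $A_{1,-}(v_1)A_{1,+}(v_2)$ and the latter giving, by the same Fubini-with-$1/(-z\zeta)$ argument as above, one of $\tilde{b}_0(0,0),\tilde{b}_0(0,1),\tilde{b}_0(1,0)$ per definitions (\ref{brtilde})--(\ref{brtilde0}). Similarly $\tilde{\bk}_{3,1}$ and $\tilde{\bk}_{4,1}$ are obtained by setting $\beta=0$ in their integrands, and the $\zeta$- or $\omega$-integrals produce $A_{0,-}^{(-1)}c_\delta$ or $c_\delta$. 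For $\tilde{\bM}_{i,1}$: since $\tilde{\bM}_1$ and $\tilde{\bM}_3$ are $\beta$-independent, $\tilde{\bM}_{1,1}=\tilde{\bM}_{3,1}=0$. For $\tilde{\bM}_{2,1}$ I differentiate under the integral; using $\partial_\xi G_{\xi,\eta}(z)=-zG_{\xi,\eta}(z)$ and $d\xi_2/d\beta|_0=\xi$, while $d\eta_2/d\beta|_0=d\beta'/d\beta|_0=d(v/\beta')/d\beta|_0=0$, the two contributions combine to a factor $\xi(\zeta-z)$ that cancels $(z-\zeta)$; the integrals then decouple and (\ref{GAk1})--(\ref{GAk2}) give $\tilde{\bM}_{2,1}=-\xi\,A_{1,-}\otimes A_{1,+}$.

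The computation is entirely mechanical; the only point requiring any care is the matching of the resulting concrete multi-fold contour integrals with the objects $\tilde{b}_0(\epsilon_1,\epsilon_2)$, $(\bK_0\bA_{0,+})^{(-1)}$, $A_{0,\pm}^{(-1)}$, $c_\delta$, which is done uniformly by writing $1/z=\int_0^\infty e^{-\lambda z}\,d\lambda$ (and $-1/\zeta=\int_0^\infty e^{\lambda\zeta}\,d\lambda$) and applying Fubini to recognize the integrals as the kernels of the appropriate composed operators on $L^2(\mathbb{R}_+)$. I do not foresee a substantive obstacle beyond this bookkeeping, and would present the argument by doing one representative case (say $\tilde{\bk}_{6,0}$ or $\tilde{\bk}_{1,1}$) in detail and indicating that the remaining entries are computed in the same way.
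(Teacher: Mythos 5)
Your proposal is correct and is essentially the paper's own argument: the paper simply says the proof ``is completely analogous to that of Lemma \ref{LemkiMiformulas} starting from (\ref{M1tilde}) to (\ref{k7tilde})'', and that earlier proof is exactly the direct Taylor expansion of the contour-integral formulas (via (\ref{diff})), with the Fubini trick $1/(z-\zeta)=\int_0^\infty e^{-\lambda(z-\zeta)}\,d\lambda$ and (\ref{GAk1})--(\ref{GAk2}) used to identify the resulting multiple integrals with the kernels $\bK_0$, $A_{0,\pm}^{(-k)}$, etc.\ and the scalars $\tilde b_0(\epsilon_1,\epsilon_2)$. You correctly recorded the relevant derivative data $d\beta'/d\beta|_0=0$, $d\xi_2/d\beta|_0=\xi$, $d\eta_2/d\beta|_0=0$, and your interpretation of the lemma's typo (it should read $\tilde{\bM}_{2,0}=\tilde{\bM}_{3,0}=\bK_1^*$) is the right one.
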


\begin{proof}
The proof is completely analogous to that of Lemma \ref{LemkiMiformulas} starting from (\ref{M1tilde}) to (\ref{k7tilde}).
\end{proof}
We will use the notation 
\begin{equation}\label{Ltilde0}
\tilde{\bL}_0=(\bI-\bM_0)^{-1}.
\end{equation}
It follows from (\ref{Psfor}) and Lemma \ref{LemkiMitilde}, that
\begin{equation}\label{P0for}
\bP_0=\begin{pmatrix} -\bK_{1}^* & 0  \\
                             (1-u^{-1})\tilde{\bk}_{6,0}- \tilde{\bk}_{7,0} & (u^{-1}-1)\tilde{\bM}_{0}.
                                              \end{pmatrix} 
             \end{equation}        
\begin{lemma}\label{LemP0exp}
For $|u|$ large enough
\begin{equation}\label{detP0}
\det(\bI+\bP_0)=F_2(\xi_1+\eta_1^2)F_2(\xi+\eta^2)\left(1+\sum_{k=1}^\infty\tilde{r}_k u^{-k}\right),
\end{equation}
and
\begin{equation}\label{IdP0inv}
(\bI+\bP_0)^{-1}=\sum_{k=0}^\infty \bJ(k)u^{-k}.
\end{equation}                   
In particular
\begin{equation}\label{J0}
\bJ(0)=\begin{pmatrix} -\bL_{1}^* & 0  \\
                             \tilde{\bL}_0\tilde{C}_1\otimes A_{1,+}\bL_1^* & \tilde{\bL}_0
                                              \end{pmatrix} ,
             \end{equation}  
where
\begin{equation}\label{C1tilde}
\tilde{C}_1=c_{-\delta}(\bK_0^{(-1)}-(\bK_0A_{0,+})^{(-1)}).
\end{equation}   
\end{lemma}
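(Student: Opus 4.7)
The plan is to exploit the fact that $\bI+\bP_0$ is lower block-triangular in view of (\ref{P0for}): the $(1,2)$ block vanishes, the $(1,1)$ block is $\bI-\bK_1^*$ (independent of $u$), the $(2,2)$ block is $\bI+(u^{-1}-1)\bM_0$, and the $(2,1)$ block is $(1-u^{-1})\tilde{\bk}_{6,0}-\tilde{\bk}_{7,0}$. Both the Fredholm determinant and the inverse of a block-triangular operator matrix have the familiar closed form, so the computation reduces to handling one scalar Fredholm determinant and one Neumann series.

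First I would treat the determinant. Block triangularity gives
\begin{equation*}
\det(\bI+\bP_0)=\det(\bI-\bK_1^*)\,\det\!\bigl(\bI+(u^{-1}-1)\bM_0\bigr).
\end{equation*}
By (\ref{TW2}) the first factor equals $F_2(\xi_1+\eta_1^2)$. For the second, factor out $\bI-\bM_0$ to write $\bI+(u^{-1}-1)\bM_0=(\bI-\bM_0)(\bI+u^{-1}\tilde{\bL}_0\bM_0)$, and note that $\bM_0$ is the conjugation of $\bK_0$ analogous to (\ref{M1delta}), so by (\ref{KAiry2}) and (\ref{TW2}) we have $\det(\bI-\bM_0)=F_2(\xi+\eta^2)$. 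Since $\tilde{\bL}_0\bM_0$ is trace class, $\det(\bI+u^{-1}\tilde{\bL}_0\bM_0)$ is entire in $u^{-1}$ and for $|u|$ larger than the spectral radius of $\tilde{\bL}_0\bM_0$ expands absolutely as $1+\sum_{k\ge1}\tilde r_k u^{-k}$. Combining the two factors yields (\ref{detP0}).

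For the inverse I would apply the block-triangular inversion formula $\begin{pmatrix}A&0\\C&D\end{pmatrix}^{-1}=\begin{pmatrix}A^{-1}&0\\-D^{-1}CA^{-1}&D^{-1}\end{pmatrix}$ with $A=\bI-\bK_1^*$, $D=\bI+(u^{-1}-1)\bM_0$, and $C=(1-u^{-1})\tilde{\bk}_{6,0}-\tilde{\bk}_{7,0}$. Since $A^{-1}=\bL_1^*$ is $u$-independent, the $u$-expansion comes entirely from $D^{-1}$. From the factorisation above, $D^{-1}=(\bI+u^{-1}\tilde{\bL}_0\bM_0)^{-1}\tilde{\bL}_0$, and for $|u|$ large the Neumann series gives $D^{-1}=\sum_{k\ge0}(-u^{-1}\tilde{\bL}_0\bM_0)^k\tilde{\bL}_0$. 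This proves (\ref{IdP0inv}) and reads off $\bJ(k)$ for every $k$; in particular the $k=0$ contribution gives the $(1,1)$ and $(2,2)$ entries of $\bJ(0)$, while the $(2,1)$ entry at $k=0$ is $-\tilde{\bL}_0(\tilde{\bk}_{6,0}-\tilde{\bk}_{7,0})\bL_1^*$.

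The last step is to identify this with $\tilde{\bL}_0\tilde C_1\otimes A_{1,+}\bL_1^*$. Using the formulas for $\tilde{\bk}_{6,0}$ and $\tilde{\bk}_{7,0}$ from Lemma \ref{LemkiMitilde},
\begin{equation*}
\tilde{\bk}_{6,0}-\tilde{\bk}_{7,0}=c_{-\delta}\bigl[(\bK_0A_{0,+})^{(-1)}-\bK_0^{(-1)}\bigr]\otimes A_{1,+}=-\tilde C_1\otimes A_{1,+},
\end{equation*}
by the definition (\ref{C1tilde}); applying $\tilde{\bL}_0$ on the left and $\bL_1^*$ on the right, together with the sign from the inversion formula, gives exactly $\tilde{\bL}_0\tilde C_1\otimes A_{1,+}\bL_1^*$. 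The only technical point is justifying the Neumann expansion, which is routine once one observes that $\tilde{\bL}_0\bM_0=\tilde{\bL}_0-\bI$ is bounded on $L^2(\bbR_+)$; this controls the radius of convergence and makes all series manipulations legitimate for $|u|$ sufficiently large.
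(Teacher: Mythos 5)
Your argument is correct and follows exactly the route the paper indicates (the paper's proof simply says it is completely analogous to Lemma~\ref{LemQ0exp}, which is precisely the block-triangular determinant/block-inverse and Neumann-series computation you carry out, with $\bR=(\bI-\bM_1)^{-1}\bM_1$ replaced by $\tilde{\bL}_0\bM_0=(\bI-\bM_0)^{-1}\bM_0$). One point worth flagging: your derivation yields $A^{-1}=\bL_1^*=(\bI-\bK_1^*)^{-1}$ as the $(1,1)$ entry of $\bJ(0)$, whereas the statement (\ref{J0}) as printed has $-\bL_1^*$. This is almost certainly a sign typo in the paper: the analogous formula (\ref{Q00}) for $\bQ_0(0)$ carries a plus sign on $\bL_2^*$, and the subsequent evaluation of $\Tr\bJ(0)\bP_1(-1)$ in the proof of Theorem~\ref{Thmalphainfty} uses $+d_1\Tr\bL_1^*A_{1,-}\otimes A_{1,+}$, which requires $+\bL_1^*$; so your answer is the consistent one.
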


\begin{proof} 
This is completely analogous to the proof of Lemma \ref{LemQ0exp}. We have also used Lemma \ref{LemkiMitilde}.
\end{proof}

From (\ref{detP0}) and the fact that
\begin{equation}\label{uint2}
\frac 1{\pii}\int_{\gamma_r}\frac{u^k}{u(u-1)}du=\begin{cases} 1 & \text{if $k\ge 1$} \\
0 & \text{if $k\le 0$},
\end{cases}
\end{equation}
since $r>1$, we see that
\begin{equation}\label{H0zero}
H_0=\frac 1{\pii}\int_{\gamma_r}\det(\bI+\bP_0)\frac{du}{u(u-1)}=0.
\end{equation}                     
Thus, by (\ref{Hexp}),
\begin{equation}\label{Hexp2}
H_1(\beta)=H_1\beta+\frac 12H_2\beta^2+\dots.
\end{equation}    
Using analyticity we get
\begin{align}\label{dH}
\frac{\partial H}{\partial\xi_1}&=\frac{\partial H_1}{\partial\xi_1}\beta+\frac 12\frac{\partial H_2}{\partial\xi_1}\beta^2+\dots,\\
\frac{\partial H}{\partial\xi}&=\frac{\partial H_1}{\partial\xi}\beta+\frac 12\frac{\partial H_2}{\partial\xi}\beta^2+\dots,\notag
\end{align}
and inserting this into (\ref{Gformula}) we find
\begin{equation}\label{GHfor}
G(\beta)=\frac{\partial H_1}{\partial\xi}+\left(\frac 12\frac{\partial H_2}{\partial\xi}-\frac{\partial H_1}{\partial\xi_1}\right)\beta+\dots.
\end{equation}  
From (\ref{Hdef}) and (\ref{Hs}) we obtain
 \begin{equation}\label{H1for}
H_1=\frac 1{\pii}\int_{\gamma_r}\det(\bI+\bP_0)\Tr(\bI+\bP_0)^{-1}\bP_1\frac{du}{u(u-1)}.
\end{equation}   
We will only compute $G(0)$, i.e.
\begin{equation}\label{G0for}
G(0)=\frac{\partial H_1}{\partial\xi}.
\end{equation}  
The next term in the expansion in (\ref{GHfor}) can also be computed with more effort. 

\begin{proof}[Proof of Theorem \ref{Thmalphainfty}]
If we insert (\ref{detP0}), (\ref{IdP0inv}) into (\ref{G0for}) and use
\begin{equation*}
\bP_1=u\bP_1(-1)+\bP_1(0)+u^{-1}\bP_1(1),
\end{equation*}
from (\ref{Psfor}), we see that
\begin{align}\label{H1for2}
H_1&=F_2(\xi_1+\eta_1^2)F_2(\xi+\eta^2)\frac 1{\pii}\int_{\gamma_r}\left(1-\sum_{k=1}^\infty\tilde{r}_k u^{-k}\right)\\
&\times\Tr\left[\left(\sum_{k=0}^\infty \bJ(k)u^{-k}\right)
\left(u\bP_1(-1)+\bP_1(0)+u^{-1}\bP_1(1)\right)\right]\frac{du}{u(u-1)}.\notag
\end{align}
Using (\ref{uint2}), it follows that
\begin{equation}\label{H1for4}
H_1=F_2(\xi_1+\eta_1^2)F_2(\xi+\eta^2)\Tr\bJ(0)\bP_1(-1).
\end{equation} 
From Lemma \ref{LemkiMitilde}, we obtain
\begin{align*}
 -\tilde{\bk}_{1,1}+\tilde{\bk}_{2,1}+\tilde{\bk}_{5,1}+\tilde{\bM}_{3,1}-\tilde{\bM}_{2,1}&=(-\tilde{b}_0(0,0)+\tilde{b}_0(0,1)+\tilde{b}_0(1,0)+\xi)A_{1,-}\otimes A_{1,+}\\
 &:=d_1A_{1,-}\otimes A_{1,+},
 \end{align*}
 and
 \begin{equation*}
  \tilde{\bk}_{3,1}-\tilde{\bk}_{4,1}=A_{1,-}\otimes(A_{0,-}^{(-1)}-1)c_\delta:=A_{1,-}\otimes\tilde{C}_2.
 \end{equation*}
 Thus, by (\ref{Psfor}),
 \begin{equation*}
 \bP_1(0)=\begin{pmatrix} d_1A_{1,-}\otimes A_{1,+} & A_{1,-}\otimes\tilde{C}_2  \\
                             0& 0
                                              \end{pmatrix} ,
             \end{equation*}
 Combined with (\ref{J0}), this gives
 \begin{align*}
 \Tr\bJ(0)\bP_1(-1)&=\Tr\begin{pmatrix} -\bL_{1}^* & 0  \\
                             \tilde{\bL}_0\tilde{C}_1\otimes A_{1,+}\bL_1^* & \tilde{\bL}_0
                                              \end{pmatrix}\begin{pmatrix} d_1A_{1,-}\otimes A_{1,+} & A_{1,-}\otimes\tilde{C}_2  \\
                             0& 0
                                              \end{pmatrix}\\
    &=d_1\Tr\bL_1^*A_{1,-}\otimes A_{1,+}+(\Tr \bL_1^*A_{1,-}\otimes A_{1,+})(\Tr\tilde{\bL}_0\tilde{C}_1\otimes\tilde{C}_2)=\tilde{a}_0\left(d_1
    +\Tr\tilde{\bL}_0\tilde{C}_1\otimes\tilde{C}_2\right).
    \end{align*}
Now,
\begin{align*}
\Tr\tilde{\bL}_0\tilde{C}_1\otimes\tilde{C}_2&=\Tr(\bI-\bK_0)^{-1}(\bK_0^{(-1)}-(\bK_0A_{0,+})^{(-1)})\otimes(A_{0,-}^{(-1)}-(A_{0,-}^0)^{(-1)})  \\
&=\tilde{g}_1(1,0)-\tilde{g}_1(0,0)-\tilde{g}_1(1,1)+\tilde{g}_1(0,1)  \\
&=\tilde{b}_0(1,1)-\tilde{b}_0(1,0) -\tilde{b}_0(0,1)-\tilde{b}_1(0,0)  -\tilde{b}_1(1,1)+\tilde{b}_1(1,0)  +\tilde{b}_1(0,1),
\end{align*}      
by (\ref{grtilde}) and (\ref{g1tildefor}). Since $\tilde{b}_0(1,1)=\tilde{b}_0(0,0)$, we see that
\begin{equation*}
\Tr\bJ(0)\bP_1(-1)=\frac{F_2'(\xi_1+\eta_1^2)}{F_2(\xi_1+\eta_1^2)}\left(\xi-\tilde{b}_1(0,0)-\tilde{b}_1(1,1)+\tilde{b}_1(0,1)+\tilde{b}_1(1,0)\right).
\end{equation*}
and hence
\begin{equation}\label{H1for3}
H_1= F_2'(\xi_1+\eta_1^2)F_2(\xi+\eta^2)\left(\xi-\tilde{b}_1(0,0)-\tilde{b}_1(1,1)+\tilde{b}_1(0,1)+\tilde{b}_1(1,0)\right).
\end{equation}
Consequently, by (\ref{Gdef}), (\ref{G0for}) and (\ref{H1for3}),
\begin{equation*}
\lim_{\alpha\to\infty}\frac{\partial F_{tt}}{\partial\xi_1}(\xi_1,\eta_1;\frac{\xi+\alpha\xi_1}{\alpha'},\frac{\eta+\alpha^2\xi_1}{\alpha'^2};\alpha)
=F_2'(\xi_1+\eta_1^2)\frac{\partial}{\partial\xi}\left(F_2(\xi+\eta^2)\psi(\xi,\eta)\right),
\end{equation*}
where
\begin{equation*}
\psi(\xi,\eta)=\xi-\tilde{b}_1(0,0)-\tilde{b}_1(1,1)+\tilde{b}_1(0,1)+\tilde{b}_1(1,0).
\end{equation*}
This completes the proof of the theorem.
\end{proof}

\section{Proof of some Lemmas}\label{secLemmas}
In this section we will give the proofs of some Lemmas from the previous sections.

\begin{proof}[Proof of Lemma \ref{Lema0}]
From (\ref{Ak}) and (\ref{Ai}), we see that
\begin{equation}\label{A1diff}
A_{2,\pm}^{(1)}(v)=-\frac{d}{dv}A_{2,\pm}(v)=-\frac{\partial}{\partial \xi_2}A_{2,\pm}(v).
\end{equation}
Thus, by (\ref{Kistar}) and (\ref{Akernel}),
\begin{align*}
\frac{\partial}{\partial \xi_2}\bK_{2}(v_1,v_2)&=\frac{\partial}{\partial \xi_2}\int_{\bbR_+}A_{2,-}(v_1+v_3)A_{2,+}(v_3+v_2)\,dv_3\\&=
\int_{\bbR_+}\frac{\partial}{\partial v_3}\left(A_{2,-}(v_1+v_3)A_{2,+}(v_3+v_2)\right)\,dv_3
=-(A_{2,-}\otimes A_{2,+})(v_1,v_2).
\end{align*}
Consequently,
\begin{align*}
\frac{\partial}{\partial \xi_2}F_2(\xi_2+\eta_2^2)&=\frac{\partial}{\partial \xi_2}\det(\bI-\bK_2^*)=-\det(\bI-\bK_2^*)\Tr(\bI-\bK_2^*)^{-1}\frac{\partial\bK_2^*}{\partial \xi_2}\\
&=F_2(\xi_2+\eta_2^2)\Tr(\bI-\bK_2^*)^{-1}A_{2,-}\otimes A_{2,+}=F_2(\xi_2+\eta_2^2)a_0,
\end{align*}
by (\ref{a0}), and we have proved (\ref{a0TW}). Furthermore, by (\ref{A1diff}),
\begin{align*}
\frac{\partial a_0}{\partial \xi_2}&=-\int_{\bbR_+^2}A_{2,+}^{(1)}(v_1)(\bI-\bK_2^*)^{-1}(v_1,v_2)A_{2,-}(v_2)\,d^2v\\
&+\int_{\bbR_+^2}A_{2,+}(v_1)(\bI-\bK_2^*)^{-1}\frac{\partial\bK_2^*}{\partial \xi_2}(\bI-\bK_2^*)^{-1}(v_1,v_2)A_{2,-}(v_2)\,d^2v\\
&-\int_{\bbR_+^2}A_{2,+}(v_1)(\bI-\bK_2^*)^{-1}(v_1,v_2)A_{2,-}^{(1)}(v_2)\,d^2v=-a_1^*-a_0^2-a_1.
\end{align*}
If $\eta_1=0$, then $a_1^*=a_1$ by (\ref{a1a1star}), and hence 
\begin{equation*}
a_1=\frac 12\left(-a_0^2-\frac{\partial a_0}{\partial \xi_2}\right)=-\frac {F_2''(\xi_2)}{2F_2(\xi_2)},
\end{equation*}
where we used (\ref{a0TW}) with $\eta_2=0$.
\end{proof}

\begin{proof}[Proof of Lemma \ref{Lembrsrelation}]
Changing $\eta_1$ to $-\eta_1$ interchanges $A_{1,-}$ and $A_{1,+}$, and hence changes $\bK_1$ to $\bK_1^*$. Thus, by (\ref{brs}),
\begin{align*}
b_{r,s}(\epsilon_1,\epsilon_2)(\xi_1,-\eta_1)&=\int_{\bbR_+^2}(\bA_{1,+}^{\epsilon_1}(\bI-\bK_1^*)^{-r}\bA_{1,-}^{\epsilon_2})
(\lambda_1,\lambda_2)\frac{\lambda_2^s}{s!}\,d^2\lambda\\
&=\int_{\bbR_+^4}(\bA_{1,+}^{\epsilon_1}(\lambda_1,\lambda_3)(\bI-\bK_1^*)^{-r}(\lambda_3,\lambda_4)\bA_{1,-}^{\epsilon_2})
(\lambda_4,\lambda_2)\frac{\lambda_2^s}{s!}\,d^4\lambda.
\end{align*}
Interchanging $\lambda_1$ and $\lambda_2$, and $\lambda_3$ and $\lambda_4$, gives
\begin{align*}
b_{rs}(\epsilon_1,\epsilon_2)(\xi_1,-\eta_1)
&=\int_{\bbR_+^4}(\bA_{1,+}^{\epsilon_1}(\lambda_2,\lambda_4)(\bI-\bK_1^*)^{-r}(\lambda_4,\lambda_3)\bA_{1,-}^{\epsilon_2})
(\lambda_3,\lambda_1)\frac{\lambda_1^s}{s!}\,d^4\lambda\\
&=\int_{\bbR_+^4}\frac{\lambda_1^s}{s!}(\bA_{1,.}^{\epsilon_2}(\lambda_2,\lambda_4)(\bI-\bK_1^*)^{-r}(\lambda_3,\lambda_4)\bA_{1,+}^{\epsilon_1})
(\lambda_4,\lambda_2)\,d^4\lambda\\
&=b_{r,s}^*(\epsilon_2,\epsilon_1)(\xi_1,\eta_1),
\end{align*}
since $A_{1,-}$ and $A_{1,+}$ are symmetric. This proves (\ref{brsbrsstar}). 

Also, to prove (\ref{b0sb0sstar}), note that
\begin{equation*}
b_{0,s}(0,1)=\int_{\bbR_+^2}\bA_{1,+}(\lambda_1+\lambda_2)\frac{\lambda_2^s}{s!}\,d^2\lambda=
\int_{\bbR_+^2}\frac{\lambda_1^s}{s!}\bA_{1,+}(\lambda_1+\lambda_2)\,d^2\lambda=b_{0,s}^*(0,1),
\end{equation*}
and analogously for $b_{0,s}(1,0)$. The formula (\ref{br0br0star}) follows immediately from the definitions.

By (\ref{a1}),
\begin{equation*}
a_1(\xi_2,-\eta_2)=\Tr(\bI-\bK_2)^{-1}A_{2,+}^{(1)}\otimes A_{2,-}=
\int_{\bbR_+^2}\bA_{2,-}(\lambda_1)(\bI-\bK_2)^{-1}(\lambda_1,\lambda_2)A_{2,+}^{(1)}(\lambda_2)\,d^2\lambda.
\end{equation*}
Interchanging $\lambda_1$ and $\lambda_2$ shows that this equals,
\begin{equation*}
\int_{\bbR_+^2}\bA_{2,+}^{(1)}(\lambda_1)(\bI-\bK_2^*)^{-1}(\lambda_1,\lambda_2)A_{2,-}(\lambda_2)\,d^2\lambda=a_1(\xi_2,\eta_2),
\end{equation*}
since $\bK_2(\lambda_2,\lambda_1)=\bK_2^*(\lambda_1,\lambda_2)$. This proves (\ref{a1a1star}). The proof of (\ref{r1rel}) is analogous.
\end{proof}

\begin{proof}[Proof of Lemma \ref{LemkiMiformulas}]
We will only consider a few cases. The other are handled in a completely analogous way.

Note that since $\alpha'=1+O(\alpha^3)$ we can set $\alpha'=1$ when computing $\alpha$-derivatives up to order 2 at $\alpha=0$. If $f$ is a function of $\alpha$, $\Delta\xi$
and $\Delta\eta$, we see from (\ref{deltaxideltaeta}) that
\begin{align}\label{diff}
\left.\frac{df}{d\alpha}\right|_{\alpha=0}&=\left.\frac{\partial f}{\partial\alpha}\right|_{\alpha=0}-\xi_1\left.\frac{\partial f}{\partial\Delta\xi}\right|_{\alpha=0},\\
\left.\frac{d^2f}{d\alpha^2}\right|_{\alpha=0}&=\left.\frac{\partial^2 f}{\partial\alpha^2}\right|_{\alpha=0}-2\xi_1\left.\frac{\partial^2 f}{\partial\alpha\partial\Delta\xi}\right|_{\alpha=0}
-2\eta_1\left.\frac{\partial f}{\partial\Delta\eta}\right|_{\alpha=0}+\xi_1^2\left.\frac{\partial^2 f}{\partial\Delta\xi^2}\right|_{\alpha=0}.\notag
\end{align}
From (\ref{M3}), we see that
\begin{align*}
\bM_{3,0}(v_1,v_2)&=\frac{1}{(\pii)^2}\int_{\Gamma_D}dz\int_{\Gamma_{-d}}d\zeta\frac{G_{\xi_2+v_2,\eta_2}(z)}{G_{\xi_2+v_1,\eta_2}(\zeta)(z-\zeta)}\\
&=\int_0^\infty A_{2,-}(v_1,\lambda)A_{2,+}(\lambda,v_2)\,d\lambda=\bK_2^*(v_1,v_2).
\end{align*}
Using (\ref{M3}), (\ref{GAk1}), (\ref{GAk2}) and (\ref{diff}), we get
\begin{align*}
\bM_{3,1}(v_1,v_2)&=-\xi_1\frac{1}{(\pii)^2}\int_{\Gamma_D}dz\int_{\Gamma_{-d}}d\zeta\frac{G_{\xi_2+v_2,\eta_2}(z)(\zeta-z)}{G_{\xi_2+v_1,\eta_2}(\zeta)(z-\zeta)}\\
&=\xi_1\left(\frac 1{\pii}\int_{\Gamma_{-d}}\frac{d\zeta}{G_{\xi_2+v_1,\eta_2}(\zeta)}\right)\left(\int_{\Gamma_D}G_{\xi_2+v_2,\eta_2}(z)\,dz\right)=
\xi_1A_{2,-}\otimes A_{2,+}(v_1,v_2),
\end{align*}
and 
\begin{align*}
\bM_{3,2}(v_1,v_2)&=-2\eta_1\frac{1}{(\pii)^2}\int_{\Gamma_D}dz\int_{\Gamma_{-d}}d\zeta\frac{G_{\xi_2+v_2,\eta_2}(z)(z^2-\zeta^2)}{G_{\xi_2+v_1,\eta_2}(\zeta)(z-\zeta)}\\
&+\xi_1^2\frac{1}{(\pii)^2}\int_{\Gamma_D}dz\int_{\Gamma_{-d}}d\zeta\frac{G_{\xi_2+v_2,\eta_2}(z)(\zeta-z)^2}{G_{\xi_2+v_1,\eta_2}(\zeta)(z-\zeta)}\\
&=(\xi_1^2-2\eta_1)\left(\frac 1{\pii}\int_{\Gamma_{-d}}\frac{d\zeta}{G_{\xi_2+v_1,\eta_2}(\zeta)}\right)\left(\int_{\Gamma_D}zG_{\xi_2+v_2,\eta_2}(z)\,dz\right)\\
&+(-\xi_1^2-2\eta_1)\left(\frac 1{\pii}\int_{\Gamma_{-d}}\frac{\zeta d\zeta}{G_{\xi_2+v_1,\eta_2}(\zeta)}\right)\left(\int_{\Gamma_D}G_{\xi_2+v_2,\eta_2}(z)\,dz\right)\\
&=(\xi_1^2-2\eta_1)A_{2,-}\otimes A_{2,+}^{(1)}(v_1,v_2)+(\xi_1^2+2\eta_1)A_{2,-}^{(1)}\otimes A_{2,+}(v_1,v_2),
\end{align*}
by (\ref{GAk1}) and (\ref{GAk2}).

It follows immediately from (\ref{k1}) that $\bk_{1,0}=0$. Using (\ref{diff}), we see that
\begin{align*}
\bk_{1,1}(v_1,v_2)&=\frac{1}{(\pii)^4}\int_{\Gamma_{D_3}}dz\int_{\Gamma_{D_2}}dw\int_{\Gamma_{-d_3}}d\zeta\int_{\Gamma_{-d_2}}d\omega
\frac{G_{\xi_1,\eta_1}(z)G_{\xi_2+v_2,\eta_2}(w)}{G_{\xi_1,\eta_1}(\zeta)G_{\xi_2+v_1,\eta_2}(\omega)(z-\zeta)z(-\zeta)}\\
&=A_{2,-}(v_1)\bA_{1,+}^{(-1)}\bA_{1,-}^{(-1)}(0,0)A_{2,+}(v_2)=b_0(0,0)A_{2,-}\otimes A_{2,+}(v_1,v_2),
\end{align*}
since, using the definitions,
\begin{align*}
\bA_{1,+}^{(-1)}\bA_{1,-}^{(-1)}(0,0)&=\int_{\bbR_+}\bA_{1,+}^{(-1)}(0,\lambda_1)\bA_{1,-}^{(-1)}(,\lambda_1,0)\,d\lambda_1\\
&=\int_{\bbR_+^3}A_{1,+}(\lambda_1+\lambda_2)A_{1,-}(\lambda_1+\lambda_3)\,d^3\lambda=\int_{\bbR_+^2}\bK_1(\lambda_1,\lambda_2)\,d^2\lambda=b_0(0,0).
\end{align*}
From (\ref{k1}) and (\ref{diff}), it follows that
\begin{align*}
&\bk_{1,2}(v_1,v_2)=\left.\frac{\partial^2 \bk_1}{\partial\alpha^2}\right|_{\alpha=0}(v_1,v_2)-2\xi_1\left.\frac{\partial^2 \bk_1}{\partial\alpha\partial\Delta\xi}\right|_{\alpha=0}(v_1,v_2)\\
&=\frac{1}{(\pii)^4}\int_{\Gamma_{D_3}}dz\int_{\Gamma_{D_2}}dw\int_{\Gamma_{-d_3}}d\zeta\int_{\Gamma_{-d_2}}d\omega
\frac{G_{\xi_1,\eta_1}(z)G_{\xi_2+v_2,\eta_2}(w)}{G_{\xi_1,\eta_1}(\zeta)G_{\xi_2+v_1,\eta_2}(\omega)(z-\zeta)}\left[-\frac{2w}{z^2\zeta}-\frac{2\omega}{z\zeta^2}
+2\xi_1\frac{\omega-w}{z\zeta}\right]\\
&=2\bA_{1,+}^{(-2)}\bA_{1,-}^{(-1)}(0,0)A_{2,-}\otimes A_{2,+}^{(1)}(v_1,v_2)+2\bA_{1,+}^{(-1)}\bA_{1,-}^{(-2)}(0,0)A_{2,-}^{(1)}\otimes A_{2,+}(v_1,v_2)\\
&+2\xi_1\bA_{1,+}^{(-1)}\bA_{1,-}^{(-1)}(0,0)\left(A_{2,-}^{(1)}\otimes A_{2,+}+A_{2,-}\otimes A_{2,+}^{(1)}\right)(v_1,v_2).
\end{align*}
We now use,
\begin{equation*}
\bA_{1,+}^{(-2)}\bA_{1,-}^{(-1)}(0,0)=b_{0,1}^*(1,1),\quad \bA_{1,+}^{(-1)}\bA_{1,-}^{(-2)}(0,0)=b_{0,1}(1,1).
\end{equation*}
We can now proceed with the other cases in an analogous way. For these computations, we also use
\begin{align*}
A_{1,+}^{(-2)}(0)&=b_0(0,1),\quad A_{1,-}^{(-2)}(0)=b_0(1,0),\\
A_{1,+}^{(-3)}(0)&=2b_{0,1}(0,1),\quad A_{1,-}^{(-3)}(0)=b_{0,1}(1,0).
\end{align*}
\end{proof}

\begin{proof}[Proof of Lemma \ref{LemQ0exp}]
It follows from (\ref{Qr}) and Lemma \ref{LemkiMiformulas} that
\begin{equation}\label{IdQ0}
\bI+\bQ_0=
\begin{pmatrix} \bI-\bK_2^*  &  0\\
(1-u^{-1})\bk_{6,0}-\bk_{7,0} & \bI+(u^{-1}-1)\bM_1
\end{pmatrix}.
\end{equation}
The block-inverse formula
\begin{equation*}
\begin{pmatrix} A_{11} &  0\\
A_{21} & A_{22}
\end{pmatrix}^{-1}=
\begin{pmatrix} A_{11}^{-1} &  0\\
-A_{22}^{-1}A_{21}A_{11}^{-1} & A_{22}^{-1}
\end{pmatrix},
\end{equation*}
then gives
\begin{equation}\label{IdQ0inv}
(\bI+\bQ_0)^{-1}=
\begin{pmatrix} \bL_2^*  &  0\\
(\bI+(u^{-1}-1)\bM_0)^{-1}((u^{-1}-1)\bk_{6,0}-\bk_{7,0})\bL_2^* & (\bI+(u^{-1}-1)\bM_1)^{-1}
\end{pmatrix}
\end{equation}
if $|u|$ is sufficiently large. We can write
\begin{equation}\label{M1u}
\bI+(u^{-1}-1)\bM_1=(\bI-\bM_1)(\bI+u^{-1}(\bI-\bM_1)^{-1})=(\bI-\bM_1)(\bI+u^{-1}\bR).
\end{equation}
Thus, (\ref{IdQ0}) gives
\begin{equation*}
\det(\bI+\bQ_0)=\det(\bI-\bK_2^*)\det(\bI-\bM_1)\det(\bI+u^{-1}\bR)=
F_2(\xi_1+\eta_1^2)F_2(\xi_2+\eta_2^2)\det(\bI+u^{-1}\bR).
\end{equation*}
This proves (\ref{detQ0for}). The expansion (\ref{detRexp}) follows from
\begin{equation*}
\log\det(\bI+u^{-1}\bR)=\sum_{\ell=1}^\infty\frac{(-1)^{\ell-1}}{\ell!}u^{-k}\Tr \bR^k,
\end{equation*}
if $|u|$ is sufficiently large. From (\ref{M1u}), we see that
\begin{equation*}
(\bI+(u^{-1}-1)\bM_1)^{-1}=(\bI+u^{-1}\bR)^{-1}(\bI-\bM_1)^{-1},
\end{equation*}
and using the fact that
\begin{equation*}
(\bI+u^{-1}\bR)^{-1}=\sum_{k=0}^\infty(-1)^ku^{-k}\bR^k,
\end{equation*}
for $|u|$ is sufficiently large, we get
\begin{equation*}
(\bI+(u^{-1}-1)\bM_1)^{-1}=\sum_{k=0}^\infty(-1)^ku^{-k}\bR^k(\bI-\bM_1)^{-1}.
\end{equation*}
Inserting this into (\ref{IdQ0inv}) gives
\begin{equation*}
(\bI+\bQ_0)^{-1}=
\begin{pmatrix} \bL_2^*  &  0\\
\sum_{k=0}^\infty(-1)^ku^{-k}\bR^k\tilde{\bL}_1((u^{-1}-1)\bk_{6,0}-\bk_{7,0})\bL_2^* & \sum_{k=0}^\infty(-1)^ku^{-k}\bR^k\tilde{\bL}_1
\end{pmatrix},
\end{equation*}
from which (\ref{Q0invexp}), (\ref{Q00}) and (\ref{Q0k}) follow.
\end{proof}

\begin{proof}[Proof of Lemma \ref{Leme1e2formula}]
We see from (\ref{Fttformula1}), (\ref{detQexp}) and (\ref{Prkdef}) that
\begin{equation}\label{Fttexp}
F_{tt}(\xi_1,\eta_1;\xi_2,\eta_2;\alpha)=\frac 1{\pii}\int_{\gamma_r}\det(\bI+\bQ_0)\left[(\Tr\bP_1)\alpha+
\frac 12\left((\Tr\bP_1)^2-\Tr\bP_1^2+\Tr\bP_2\right)\alpha^2\right]\frac{du}{u-1}+O(\alpha^3).
\end{equation}
From this formula, (\ref{detQ0for}), (\ref{detRexp}) and (\ref{Prkdef}), we obtain
\begin{equation}\label{e1comp+}
e_1=\frac 1{\pii}\int_{\gamma_r}\left(1+\sum_{k=1}^\infty r_ku^{-k}\right)\left(\sum_{k=-1}^\infty\Tr\bP_1(k)u^{-k}\right)\frac{du}{u-1},
\end{equation}
and
\begin{align}\label{e2comp}
e_2&=\frac 1{\pii}\int_{\gamma_r}\frac 12\left(1+\sum_{k=1}^\infty r_ku^{-k}\right)\left[\left(\sum_{k=-1}^\infty\Tr\bP_1(k)u^{-k}\right)\left(\sum_{\ell=-1}^\infty\Tr\bP_1(\ell)u^{-\ell}\right)
\right.\\&-\left.\Tr\left(\sum_{k=-1}^\infty\bP_1(k)u^{-k}\right)\left(\sum_{\ell=-1}^\infty\bP_1(\ell)u^{-\ell}\right)+\sum_{k=-1}^\infty\Tr\bP_2(k)u^{-k}\right]\frac{du}{u-1}.\notag
\end{align}
We now use the fact that
\begin{equation}\label{uint1}
\frac 1{\pii}\int_{\gamma_r}\frac{1}{u^k(u-1)}du=\begin{cases} 0 & \text{if $k\ge 1$} \\
1 & \text{if $k\le 0$},
\end{cases}
\end{equation}
for all $r>1$. Using (\ref{e1comp+}) this gives (\ref{e1formula}), and from (\ref{e2comp}), we get (\ref{e2formula}) by (\ref{sigmakl}).
\end{proof}

\begin{proof}[Proof of Lemma \ref{Lemgrsformula}]
We have that
\begin{align*}
&g_{r,s}(\epsilon_1,\epsilon_2)\\
&=\int_{\bbR_+^2}\left(\int_{\bbR_+}\bA_{1,-}^{\epsilon_1}(\lambda_3,\lambda_1)\,d\lambda_3\right)
(\bI-\bK_1)^{-r}\bK_1^{r-1}(\lambda_1,\lambda_2)\left(\int_{\bbR_+}(\bK_1\bA_{1,+}^{\epsilon_2})(\lambda_2,\lambda_4)\lambda_4^s\,d\lambda_4\right)
\,d\lambda_1d\lambda_2\\
&=\int_{\bbR_+^2}(\bA_{1,-}^{\epsilon_1}(\bI-\bK_1)^{-r}\bK_1^{r-1}\bA_{1,+}^{\epsilon_2})(\lambda_3,\lambda_4)\frac{\lambda_4^s}{s!}\,d\lambda_3d\lambda_4.
\end{align*}
From the identity,
\begin{equation*}
(\bI-\bK_1)^{-1}\bK_1=(\bI-\bK_1)^{-1}-\bI,
\end{equation*}
we get
\begin{equation*}
(\bI-\bK_1)^{-r}\bK_1^r=\sum_{k=0}^r(-1)^{r-k}\binom{n}{k}(\bI-\bK_1)^{-k}.
\end{equation*}
Thus
\begin{align*}
g_{r,s}(\epsilon_1,\epsilon_2)&=\sum_{k=0}^r(-1)^{r-k}\binom{n}{k}\int_{\bbR_+^2}(\bA_{1,-}^{\epsilon_1}(\bI-\bK_1)^{-k}
\bA_{1,+}^{\epsilon_2})(\lambda_1,\lambda_2)\frac{\lambda_2^s}{s!}\,d^2\lambda\\
&=\sum_{k=0}^r(-1)^{r-k}\binom{n}{k}b_{k,s}(\epsilon_1,\epsilon_2).
\end{align*}
Similarly,
\begin{align*}
g_{rs}(0,0)&=\int_{\bbR_+^2}(\bI-\bK_1)^{-r}\bK_1^{r-1}(\lambda_1,\lambda_2)\bK_1^{(-1-s)}(\lambda_2)\,d^2\lambda\\
&=\int_{\bbR_+^2}(\bI-\bK_1)^{-r}\bK_1^{r}(\lambda_1,\lambda_2)\frac{\lambda_2^s}{s!}\,d^2\lambda\\
&=\int_{\bbR_+^2}(\bI-\bK_1)^{-1}\bK_1\left[\sum_{k=0}^{r-1}(-1)^{r-1-k}\binom{r-1}{k}(\bI-\bK_1)^{-k}\right](\lambda_1,\lambda_2)\frac{\lambda_2^s}{s!}\,d^2\lambda\\
&=\sum_{k=1}^r(-1)^{r-k}\binom{r-1}{k-1}\int_{\bbR_+^2}(\bI-\bK_1)^{-k}\bK_1(\lambda_1,\lambda_2)\frac{\lambda_2^s}{s!}\,d^2\lambda\\
&=\sum_{k=1}^r(-1)^{r-k}\binom{r-1}{k-1}b_{k,s}(0,0).
\end{align*}
\end{proof}

\begin{proof}[Proof of Lemma \ref{LemQF2}]
It follows from (\ref{Qformula1}) that
\begin{align}\label{detM2}
\det(\bI+\bQ(1,\beta,\Delta\xi,\xi_1,\Delta\eta,\eta_1,\delta))_Y&=
\det\begin{pmatrix} \bI-\bM_2(\beta,\Delta\xi,\xi_1,\Delta\eta,\eta_1,\delta) & 0 \\
*  &  \bI
\end{pmatrix}_Y\\
&=\det(\bI-\bM_2(\beta,\Delta\xi,\xi_1,\Delta\eta,\eta_1,\delta))_{L^2(\bbR_+)}.\notag
\end{align}
Now, using (\ref{M2}), we see that
\begin{equation}\label{M2new}
\bM_2(\beta,\Delta\xi,\xi_1,\Delta\eta,\eta_1,\delta)(v_1,v_2)=
\frac{1}{(\pii)^2\beta'}\int_{\Gamma_D}dz\int_{\Gamma_{-d}}d\zeta\frac{G_{\xi_2+v_2/\beta',\eta_2}(z)}{G_{\xi_2+v_1/\beta',\eta_2}(\zeta)(z-\zeta)}.
\end{equation}
Recall that $\xi_2$ in (\ref{M2}) is given by $\xi_2=\xi_2(\alpha,\xi_1,\Delta\xi)=(\Delta\xi+\alpha\xi_1)/\alpha'$, so in (\ref{M2new}), we have instead
\begin{equation*}
\xi_2(\beta,\Delta\xi,\xi_1)=\frac{\xi_1+\beta\Delta\xi}{\beta'}=\frac{\xi_1+\beta(\alpha'\xi_2-\alpha\xi_1)}{\beta'}=\xi_2,
\end{equation*}
so, in fact, we just get $\xi_2$ in (\ref{M2new}). An analogous argument applies to $\eta_2$. From (\ref{M2new}), we obtain
\begin{equation*}
\bM_2(\beta,\Delta\xi,\xi_1,\Delta\eta,\eta_1,\delta)(v_1,v_2)=\frac 1{\beta'}\bK_2^*(\frac{v_1}{\beta'},\frac{v_2}{\beta'}),
\end{equation*}
from which we see that the right side of (\ref{detM2}) is $F_2(\xi_2+\eta_2^2)$.
\end{proof}


\end{document}